\providecommand{\tabularnewline}{\\}
\newcommand{\lyxdeleted}[3]{}
\theoremstyle{plain}
\newtheorem{thm}{\protect\theoremname}
  \theoremstyle{remark}
  \newtheorem{rem}[thm]{\protect\remarkname}
 \theoremstyle{definition}
 \newtheorem*{defn*}{\protect\definitionname}
  \theoremstyle{plain}
  \newtheorem{prop}[thm]{\protect\propositionname}
  \theoremstyle{definition}
  \newtheorem{defn}[thm]{\protect\definitionname}
  \theoremstyle{plain}
  \newtheorem{lem}[thm]{\protect\lemmaname}
  \theoremstyle{plain}
  \newtheorem{cor}[thm]{\protect\corollaryname}
  \theoremstyle{definition}
  \newtheorem{example}[thm]{\protect\examplename}
\tikzset{    
%Define standard arrow tip
>=stealth',
%Define style for boxes
punkt/.style={rectangle, rounded corners, draw=black, thick, text width=3.5em, minimum height=2em, text centered},
empty/.style={rectangle, rounded corners, text width=3.5em, minimum height=2em, text centered},
bigpunkt/.style={rectangle, rounded corners, draw=black, thick, text width=6em, minimum height=2em, text centered},
hpunkt/.style={rectangle, rounded corners=10pt, draw=black, thick, text width=6em, minimum height=2em, text centered},
% Define arrow style   
pil/.style={->, thick, shorten <=2pt, shorten >=2pt,} 
% Loop Style
every loop/.style={max distance=10mm,looseness=10}
}
  \providecommand{\corollaryname}{Corollary}
  \providecommand{\definitionname}{Definition}
  \providecommand{\examplename}{Example}
  \providecommand{\lemmaname}{Lemma}
  \providecommand{\propositionname}{Proposition}
  \providecommand{\remarkname}{Remark}
\providecommand{\theoremname}{Theorem}
\begin{document}
\global\long\def\ms{\boldsymbol{X}_{\boldsymbol{\mathcal{L}}_{\Lambda}}}
\global\long\def\ll{\boldsymbol{\mathcal{L}}_{\Lambda}}
\global\long\def\td{\mathcal{T}_{D}}
\global\long\def\md{\boldsymbol{X}_{D}}

\title{Metrics for Formal Structures, with an Application to Kripke Models
and their Dynamics}

\author{Dominik Klein\thanks{Department of Philosophy, Bayreuth University, and Department of Political Science,
University of Bamberg} ~and Rasmus K. Rendsvig\thanks{LUIQ, Theoretical Philosophy, Lund University, and Center for Information and Bubble Studies, University of Copenhagen}\date{}}

\maketitle
\noindent \setstretch{1}\begin{abstract}

\noindent This paper introduces and investigates a family of metrics
on sets of structures for formal languages, with a special focus on
their application to sets of pointed Kripke models and modal logic,
and, in extension, to dynamic epistemic logic. The metrics are generalizations
of the Hamming distance applicable to countably infinite binary strings
and, by extension, logical theories or semantic structures. We first
study the topological properties of the resulting metric spaces. A
key result provides sufficient conditions for spaces having the Stone
property, i.e., being compact, totally disconnected and Hausdorff.
Second, we turn to mappings, where it is shown that a widely used
type of model transformations, product updates, give rise to continuous
maps in the induced topology.\vspace{14pt}

\noindent \textbf{Keywords:} metric space, general topology, modal
logic, Kripke model, model transformation, dynamic epistemic logic.

\noindent %TOC: 
%{\scriptsize\toccontents}
\end{abstract}\setstretch{1.25}

\section{Introduction}

This paper introduces and investigates a family of metrics on spaces
of a graph type, namely pointed Kripke models. Intuitively, a metric
is a distance measuring function: a map that assigns a positive, real
value to pairs of elements of some set, specifying how far these elements
are from one another. We present a general way of assigning such numbers
to pointed Kripke models, the most widely used semantic structures
for modal logic.\footnote{The metrics introduced are equally applicable to other semantic structures,
e.g., neighborhood models, as is shown below. We focus on Kripke models
due to their widespread use and tight connection with dynamic epistemic
logic.}

Apart from mathematical interest, there are several motivations for
having a metric between pointed Kripke models, including applications
in iterated multi-agent belief revision in the style of \cite{aucher2010generalizing,Caridroit2016}
and the application of dynamical systems theory to information dynamics
modeled using dynamic epistemic logic \cite{Benthem_OneLonely,Sadzik2006,Rendsvig-DS-DEL-2015,Benthem-DS-2016}.
We will expand on these applications, together with the connections
to this literature, in a later version of this paper. \medskip{}

Metrics on sets of pointed Kripke models exist have previously been
introduced. To the best of our knowledge, the first such was introduced
by G. Aucher in his \cite{aucher2010generalizing} for the purpose
of generalizing AGM to a multi-agent setting. For a similar purpose,
the authors of \cite{Caridroit2016} introduce 6 different metrics.
Neither investigate the topological properties of their metrics, but
we look forward to, in latter work, performing an in-depth comparison.

\medskip{}

This paper progresses as follows. In Section \ref{sec:Hamming}, we
introduce a family of metrics on infinite strings and present a general
case for applying the metrics to arbitrary sets of structures, given
that the structures are abstractly described by a countable set and
a possibly multi-valued semantics. We show how the metrics may be
applied to sets of pointed Kripke models and gives examples of metrics
natural from a modal logical point of view. Section \ref{sec:Topo}
is on topological properties of the resulting spaces. We show that
the introduced metrics all induce the Stone topology, which is shown
totally disconnected and, under restrictions, compact. In Section
\ref{sec:Maps}, we turn to mappings. In particular we investigate
a widely used family of mappings defined using a particular graph
product (product update with action models). We show the family continuous
with respect to the Stone topology.
\begin{rem}
This paper is not self-contained. Only definitions for a selection
of standard terms are included, and are so to fix notation. For here
undefined notions from modal logic, refer to e.g. \cite{BlueModalLogic2001,ModelTheoryModalLogic}.
For topological notions, refer to e.g. \cite{Munkres}.
\end{rem}

\section{Generalizing the Hamming Distance\label{sec:Hamming}}

The method we propose for measuring distance between pointed Kripke
models is a particular instantiation of a more general approach. The
more general approach concerns measuring the distance between finite
or infinite strings taking values from some set, $V$. The set $V$
may be thought of as containing the possible truth values for some
logic. For normal modal logic, $V$ would be binary, and the resulting
strings be made, e.g., of 1s and 0s. We think of pointed Kripke models
as being represented by such countably infinite strings: A model's
string will have a 1 on place $k$ just in case the model satisfies
the $k$th formula in some enumeration of the modal language, $0$
else.\footnote{This is the intuition. Details are in Section \ref{sec:DistancesKripke}:
To avoid double-counting, the propositions of the language \emph{modulo}
logical equivalence for a suited logic is used. \label{fn:not-formulas}}

A distance on sets of finite strings of a fixed length has been known
since 1950, when it was introduce by R.W. Hamming \cite{Hamming1950}.
Informally, the \textbf{Hamming distance} between two such strings
is the number of places on which the two strings differ. If the strings
are infinite, the Hamming distance between them clearly is sometimes
undefined.

For faithfully representing pointed Kripke models as strings of formulas,
the strings in general needs to be infinite. This is the case as there
are infinitely many modally expressible mutually non-equivalent properties
of pointed Kripke models. We return to this below. To accommodate
infinite strings, we generalize the Hamming distance:\footnote{ To the best of our knowledge, the generalization is new\textemdash at
least we have failed to find it in the comprehensive \emph{Encyclopedia
of Distances} \cite{Deza2016}.}
\begin{defn*}
\label{def:string-metric}Let $S$ be a set of strings over a set
$V$ such that either $S\subseteq V^{n}$ for some $n\in\mathbb{N}$,
or for all $s\in S$, for all $i\in\mathbb{N}$, $s_{i}\in V$. For
all $k\in\mathbb{N}$, let

\setstretch{.8}
\[
d_{k}(s,s')={\textstyle \begin{cases}
0 & \text{ if }s_{k}=s'_{k}\\
1 & \text{ else}
\end{cases}}
\]
\setstretch{1.25}Let $w:\mathbb{N}\rightarrow\mathbb{R}_{>0}$ assign
a strictly positive \textbf{weight} to each natural number such that
$(w(k))_{k\in\mathbb{N}}$ form a convergent series, i.e., $\sum_{k=1}^{\infty}w(k)<\infty$. 

The function $d_{w}:S\times S\longrightarrow\mathbb{R}$ is then defined
by, for each $s,s'\in S$
\[
d_{w}(s,s')=\sum_{k=1}^{\infty}w(k)d_{k}(s,s').
\]
\end{defn*}
\begin{prop}
\label{prop:metric}Let $S$ and $d_{w}$ be as above. Then $d_{w}$
is a metric on $S$. 

\begin{proof}
Each $d_{w}$ is a metric on $S$ as it for all $s,s',s''\in X$ satisfies

\noindent \textbf{Positivity}, $d_{w}(s,s')\geq0$: The sum defining
$d_{w}$ contains only non-negative terms. 

\noindent \textbf{Identity of indiscernibles}, $d_{w}(s,s')=0$ iff
$s=s'$: $d_{w}(s,s')=0$ iff $d_{k}(s,s')=0$ for all $k$ iff $s_{k}=s'_{k}$
for all $k$ iff $s=s'$.

\noindent \textbf{Symmetry}, $d_{w}(s,s')=d_{w}(s',s)$: As $d_{k}(s,s')=d_{k}(s',s)$
for all $k$. 

\noindent \textbf{Triangular inequality}, $d_{w}(s,s'')\leq d_{w}(s,s')+d_{w}(s',s'')$:
If $s$ and $s''$ differ on any position $k$, then either $s$ and
$s'$ or $s'$ and $s''$ have to differ on the same position. Hence
$w(k)d_{k}(s,s'')\leq w(k)d_{k}(s,s')+w(k)d_{k}(s',s'')$, for each
$k$, which establishes the triangular equality: $\sum_{k=1}^{\infty}w(k)d_{k}(s,s'')\leq\sum_{k=1}^{\infty}w(k)d_{k}(s,s')+\sum_{k=1}^{\infty}w(k)d_{k}(s',s'')$.
\end{proof}

\end{prop}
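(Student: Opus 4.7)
The plan is to check the four defining conditions for a metric in turn, after first noting that $d_w$ is indeed a well-defined real-valued function. Well-definedness is immediate: since $0\le d_k(s,s')\le 1$ for every $k$, the series $\sum_{k=1}^{\infty} w(k)\, d_k(s,s')$ is dominated term-by-term by $\sum_{k=1}^{\infty} w(k)$, which converges by hypothesis, so $d_w(s,s')\in[0,\sum_k w(k)]\subseteq\mathbb{R}$.

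Next I would dispatch positivity and symmetry as one-liners: every summand $w(k)\,d_k(s,s')$ is non-negative, and each $d_k$ is visibly symmetric in its two arguments. For identity of indiscernibles, the key point is that all weights are strictly positive, so a non-negative convergent series of the form $\sum_k w(k)\,d_k(s,s')$ vanishes iff every $d_k(s,s')$ vanishes, which in turn is equivalent to $s_k=s'_k$ for all $k$, i.e., $s=s'$ (using the hypothesis that strings are either of fixed finite length or indexed by all of $\mathbb{N}$, so coordinate-wise equality determines the string).

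The triangle inequality is the only step with any content. I would prove it coordinate-wise: for each $k$, the function $d_k$ is itself a metric on $V$ (it is the discrete $\{0,1\}$-metric composed with the $k$-th projection), so $d_k(s,s'')\le d_k(s,s')+d_k(s',s'')$. Multiplying by the strictly positive weight $w(k)$ preserves the inequality, and then summing over $k$ yields
\[
d_w(s,s'')=\sum_{k=1}^{\infty}w(k)\,d_k(s,s'')\le\sum_{k=1}^{\infty}w(k)\,d_k(s,s')+\sum_{k=1}^{\infty}w(k)\,d_k(s',s''),
\]
where termwise addition of convergent series of non-negative reals is justified by absolute convergence (all three series are bounded by $\sum_k w(k)<\infty$).

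Honestly there is no real obstacle here; the only point that might merit a sentence of care is the reminder that convergence of all three involved series is what licenses the termwise manipulation in the triangle inequality, and that strict positivity of $w$ (rather than mere non-negativity) is what makes identity of indiscernibles work. Everything else is routine bookkeeping.
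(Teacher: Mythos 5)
Your proof is correct and follows essentially the same route as the paper's: verify the four axioms directly, with the triangle inequality obtained coordinate-wise from $d_k(s,s'')\le d_k(s,s')+d_k(s',s'')$ and then summed. The only additions are your explicit remarks on well-definedness of the series and on where strict positivity of $w$ is used, which the paper leaves implicit but which do not change the argument.
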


\begin{rem}
The Hamming distance is a special case of the defined family. For
$S\subseteq\mathbb{R}^{n}$, the Hamming distance $d_{H}$ is defined,
cf. \cite{Deza2016}, by $d_{H}(s,s')=|\{i:1\leq i\leq n,s_{i}\not=s'_{i}\}|$.
This function is a member of the above family given by the weight
function $h(k)=1$ for $1\leq k\leq n$, $h(k')=0$ for $k'>n$.
\end{rem}

\section{Metrics for Formal Structures\label{sec:MetricsFormalStr}}

The metrics defined above may be indirectly applied to any set of
structures that serves as a valuating semantics for a countable language.
In essence, what is required is simply an assignment of suitable weights
to formulas of the language and an addition of the weights of formulas
on which structures differ in valuation.

To illustrate the generality of the approach, we initially take the
following inclusive view on semantic valuation:
\begin{defn}
\label{def:valuation}Let a \textbf{valuation} be any map $\nu:X\times D\longrightarrow V$
where $X,D$ and $V$ are arbitrary sets, but $D$ required countable.
Refer to elements of $X$ as \textbf{structures}, to $D$ as the \textbf{descriptor},
and to elements of $V$ as \textbf{values}.
\end{defn}

A valuation $\nu$ assigns a value from $V$ to every pair $(x,\varphi),x\in X,\varphi\in D$.
The valuation Jointly, $\nu$ and $X$ thus constitute a $V$-valued
semantics for the descriptor $D$. The term \emph{descriptor} is used
here and below to emphasize the potential \emph{lack of grammar} in
the set $D$. The descriptor \emph{may} be a formal language, but
it is not required. In particular, the descriptor may be a \emph{strict
subset} of a formal language, containing only formulas of special
interest. This is exemplified in Section \ref{subsec:Examples}.

Two structures in $X$ may be considered equivalent by $\nu$, i.e.,
be assigned identical values for all $\varphi\in D$. To avoid that
two non-identical, but semantically equivalent, structures receive
a distance of zero (and thus violate the requirements of a metric),
metrics are defined over suitable quotients:
\begin{defn*}
Given a valuation $\nu:X\times D\longrightarrow V$ and a subset $D'$
of $D$, denote by $\boldsymbol{X}_{D'}$ the \textbf{quotient of
$X$ under $D'$ equivalence}, i.e., $\boldsymbol{X}_{D'}=\{\boldsymbol{x}{}_{D'}\colon x\in X\}$
with $\boldsymbol{x}{}_{D'}=\{y\in X\colon\nu(y,\varphi)=\nu(x,\varphi)\text{ for all }\varphi\in D'\}$.
\end{defn*}
Quotients are defined for subsets $D'$ of $D$ in accordance with
the comment concerning the term \emph{descriptor} above: For some
structures, it may be natural to define a semantics for a complete
formal language, $\mathcal{L}$. However, if only a subset $D'\subseteq\mathcal{L}$
is deemed relevant in determining distance, it is natural to focus
on structures under $D'$ equivalence. The terminological usage is
consistent as the subset $D'$ is itself a descriptor for the restricted
map $\nu_{|X\times D'}$.

Finally, we obtain a family of metrics on a quotient $\boldsymbol{X}_{D}$
in the following manner:
\begin{defn*}
\label{def:structure-metric} Let $\nu:X\times D\longrightarrow V$
be a valuation and $\varphi_{1},\varphi_{2},...$ an enumeration of
$D$. For all $x,y\in X$, all $\boldsymbol{x},\boldsymbol{y}\in\boldsymbol{X}_{D}$
and all $k\in\mathbb{N}$, let

\setstretch{.8}
\[
d_{k}(\boldsymbol{x},\boldsymbol{y})={\textstyle \begin{cases}
0 & \text{ if }\nu(x,\varphi_{k})=\nu(y,\varphi_{k})\\
1 & \text{ else}
\end{cases}}
\]
\setstretch{1.25}Call $w:D\rightarrow\mathbb{R}_{>0}$ a \textbf{weight
function} if it assigns a strictly positive \textbf{weight} to each
$\varphi\in D$ such that $(w(\varphi_{k}))_{k\in\mathbb{N}}$ produce
a convergent series. 

The function $d_{w}:\boldsymbol{X}_{D}\times\boldsymbol{X}_{D}\rightarrow\mathbb{R}$
is then defined by, for each $\boldsymbol{x},\boldsymbol{y}\in\boldsymbol{X}_{D}$
\[
d_{w}(\boldsymbol{x},\boldsymbol{y})=\sum_{k=1}^{\infty}w(\varphi_{k})d_{k}(\boldsymbol{x},\boldsymbol{y}).
\]
The set of such maps $d_{w}$ is denoted $\mathcal{D}_{(X,\nu,D)}$.
\end{defn*}
\begin{prop}
\label{prop:structure-metric} Every $d_{w}\in\mathcal{D}_{(X,\nu,D)}$
is a metric on $\boldsymbol{X}_{D}$.
\end{prop}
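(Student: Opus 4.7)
The plan is to reduce the claim to Proposition~\ref{prop:metric}, which already establishes that weighted generalized Hamming distances are metrics on string spaces. Fix the enumeration $\varphi_{1},\varphi_{2},\ldots$ of $D$ and, for each $\boldsymbol{x}\in\boldsymbol{X}_{D}$ with representative $x\in\boldsymbol{x}$, define the string $s^{\boldsymbol{x}}\in V^{\mathbb{N}}$ by $s^{\boldsymbol{x}}_{k}=\nu(x,\varphi_{k})$. The first step is to check that $s^{\boldsymbol{x}}$ does not depend on the choice of representative: this is built into the definition of $\boldsymbol{X}_{D}$, since members of an equivalence class agree under $\nu$ on every $\varphi\in D$. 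Hence $\iota\colon\boldsymbol{X}_{D}\to V^{\mathbb{N}}$, $\boldsymbol{x}\mapsto s^{\boldsymbol{x}}$, is a well-defined map into a string set $S$ satisfying the hypotheses of Proposition~\ref{prop:metric}.

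Next I would show that $\iota$ is injective: if $s^{\boldsymbol{x}}=s^{\boldsymbol{y}}$, then $\nu(x,\varphi_{k})=\nu(y,\varphi_{k})$ for every $k$, so $x$ and $y$ lie in the same equivalence class and $\boldsymbol{x}=\boldsymbol{y}$. The third and central step is to compare the two incarnations of $d_{w}$: the $k$th summand $w(\varphi_{k})d_{k}(\boldsymbol{x},\boldsymbol{y})$ in the present definition matches, term by term, the $k$th summand of the string distance $d_{w'}(s^{\boldsymbol{x}},s^{\boldsymbol{y}})$ for the transported weight $w'(k):=w(\varphi_{k})$. Convergence carries over by the hypothesis that $(w(\varphi_{k}))_{k\in\mathbb{N}}$ forms a convergent series, so $d_{w}(\boldsymbol{x},\boldsymbol{y})=d_{w'}(\iota(\boldsymbol{x}),\iota(\boldsymbol{y}))$.

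From these three steps the four metric axioms transfer directly. Non-negativity, symmetry and the triangle inequality are immediate from Proposition~\ref{prop:metric} applied to $\iota(\boldsymbol{x}),\iota(\boldsymbol{y}),\iota(\boldsymbol{z})$, since pulling back a metric along any function preserves those three properties. For identity of indiscernibles I combine the string-level axiom with the injectivity of $\iota$: $d_{w}(\boldsymbol{x},\boldsymbol{y})=0$ gives $\iota(\boldsymbol{x})=\iota(\boldsymbol{y})$, which by injectivity forces $\boldsymbol{x}=\boldsymbol{y}$. I do not anticipate any real obstacle; the only delicate point is that identity of indiscernibles genuinely needs the passage to the quotient, and this is exactly the purpose for which $\boldsymbol{X}_{D}$ was introduced.
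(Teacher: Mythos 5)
Your proposal is correct and follows essentially the same route as the paper's own proof: both transport the problem to a string space via the map $\boldsymbol{x}\mapsto s^{\boldsymbol{x}}$ with $s^{\boldsymbol{x}}_{k}=\nu(x,\varphi_{k})$, transfer the weight function to $w'(k)=w(\varphi_{k})$, and invoke Proposition~\ref{prop:metric}. Your treatment is slightly more explicit about well-definedness on the quotient and the role of injectivity in identity of indiscernibles, but these are exactly the points the paper's bijection claim encapsulates.
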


\begin{proof}
That $d_{w}$ is a metric on $\boldsymbol{X}_{D}$ is argued using
\ref{prop:metric}: Define $S$ as the set of length $|D|$ strings
over $V$ given by $S=\{s_{\boldsymbol{x}}:\boldsymbol{x}\in\boldsymbol{X}_{D}\}$
such that for each $\boldsymbol{x}\in\boldsymbol{X}_{D}$, for each
$\varphi_{i}\in D$, $s_{\boldsymbol{x},i}=\nu(x,\varphi_{i})$. Then
the map $f:\boldsymbol{X}_{D}\rightarrow S$ given by $f(\boldsymbol{x})=s_{\boldsymbol{x}}$
is a bijection. Let $w':\mathbb{N}\rightarrow\mathbb{R}_{>0}$ be
given by $w'(k)=w(\varphi_{k})$ for all $k\in\mathbb{N}$, and let
$d_{w'}$ be the metric on $S$ given by $w'$ cf. Prop. \ref{prop:metric}.
Then $d_{w}(\boldsymbol{x},\boldsymbol{y})=d_{w'}(s_{\boldsymbol{x}},s_{\boldsymbol{y}})$
for all $x,y\in X$. Hence $d_{w}$ is a metric on $\boldsymbol{X}_{D}$.
\end{proof}
\begin{rem}
The choice of descriptor affect both the coarseness of the space $\boldsymbol{X}_{D}$
as well as the metrics definable. We return to this point several
times below.
\end{rem}

\begin{rem}
\label{rem:To-fix-intuitions}To fix intuitions, descriptors have
hitherto been hinted at as being sets of formulas from some \emph{language}.
When interested in metrics that reflect the properties of some \emph{logic},
i.e., not the syntactically discernible formulas, but the logically
discernible propositions, it is natural to partition the language
according to logical equivalence and use the resulting quotient \textendash{}
or a subset thereof \textendash{} as descriptor. This is the approach
pursued here (cf. fn. \ref{fn:not-formulas}).
\end{rem}

\section{The Application to Pointed Kripke Models\label{sec:DistancesKripke}}

To apply the metrics to pointed Kripke models, we follow the above
approach. The set $X$ will be a set of pointed Kripke models and
$D$ a set of modal logical formulas. Interpreting the latter over
the former using standard modal logical semantics gives rise to a
binary set of values, $V$, and a valuation function $\nu:X\times D\longrightarrow V$
that is classic interpretation of modal formulas on Kripke models.
In the following, we will omit all references to $\nu$ , writing$\mathcal{D}_{(X,D)}$
for $d_{w}\in\mathcal{D}_{(X,\nu,D)}$. 

\subsection{Pointed Kripke Models, their Language and Logics}

Let be given a \textbf{signature} consisting of a countable, non-empty
set of propositional \textbf{atoms} $\Phi$ and a countable, non-empty
set of \textbf{operator indices}, $\mathcal{I}$. Call the signature
\textbf{finite} when both $\Phi$ and $\mathcal{I}$ are finite. The
\textbf{modal language} \emph{$\mathcal{L}$ }for $\Phi$ and $\mathcal{I}$
is given by 
\[
\varphi:=\top\;|\;p\;|\;\neg\varphi\;|\;\varphi\wedge\varphi\;|\;\square_{i}\varphi
\]
The language $\mathcal{L}$ is countable. 

A \textbf{Kripke model} for $\Phi$ and $\mathcal{I}$ is a tuple
$M=(\left\llbracket M\right\rrbracket ,R,\left\llbracket \cdot\right\rrbracket )$
where

\noindent %
\begin{tabular}{>{\centering}p{0.03\textwidth}>{\raggedright}p{0.9\textwidth}}
\noalign{\vskip5pt}
 & $\left\llbracket M\right\rrbracket $ is a countable, non-empty set
of \textbf{states};\tabularnewline
\noalign{\vskip3pt}
 &  $R:\mathcal{I}\rightarrow\mathcal{P}(\left\llbracket M\right\rrbracket \times\left\llbracket M\right\rrbracket )$
assigns to each $i\in\mathcal{I}$ an \textbf{accessibility relation}
$R(i)$;\tabularnewline
\noalign{\vskip3pt}
 & $\left\llbracket \cdot\right\rrbracket :\Phi\rightarrow\mathcal{P}(\left\llbracket M\right\rrbracket )$
is an \textbf{atom valuation}, assigning to each atom a set of states.\tabularnewline
\noalign{\vskip3pt}
\end{tabular}

\noindent A pair $(M,s)$ with $s\in\left\llbracket M\right\rrbracket $
is a \textbf{pointed Kripke model}. For the pointed Kripke model $(M,s)$,
the shorter notation $Ms$ is used. For $R(i)$, we write $R_{i}$.

The modal language is evaluated over pointed Kripke models with standard
semantics:
\begin{center}
\begin{tabular}{lll}
$Ms\models p$ & iff & $s\in\left\llbracket p\right\rrbracket $, for all $p\in\Phi$\tabularnewline
\noalign{\vskip3pt}
$Ms\models\neg\varphi$ & iff &  $Ms\not\models\varphi$\tabularnewline
\noalign{\vskip3pt}
$Ms\models\varphi\wedge\psi$ & iff & $Ms\models\varphi$ and $Ms\models\psi$\tabularnewline
\noalign{\vskip3pt}
$Ms\models\square_{i}\varphi$ & iff & for all $t$, $sR_{i}t$ implies $Mt\models\varphi$\tabularnewline
\noalign{\vskip3pt}
\end{tabular}
\par\end{center}

Modal logics may be formulated in $\mathcal{L}$. In this article,
we only use a \textbf{logic }$\Lambda$ we refer only to extensions
of the \textbf{normal modal logics} over the language $\mathcal{L}$.
With $\Lambda$ given by context, let $\boldsymbol{\varphi}$ be the
set of formulas $\Lambda$-provably equivalent to $\varphi$. Denote
the resulting partition $\{\boldsymbol{\varphi}:\varphi\in\mathcal{L}\}$
of $\mathcal{L}$ by $\boldsymbol{\mathcal{L}}_{\Lambda}$.\footnote{$\boldsymbol{\mathcal{L}_{\Lambda}}$ is isomorphic to the domain
of the \textbf{Lindenbaum algebra} of $\Lambda$. For more on the
Lindenbaum algebra and relations to modal logic, see e.g. \cite[pp. 271]{BlueModalLogic2001}} Call $\boldsymbol{\mathcal{L}}_{\Lambda}$'s elements $\Lambda$\textbf{-propositions}.

\subsection{Descriptors for Pointed Kripke Models}

As descriptors for pointed Kripke models, we use sets of $\Lambda$-propositions.
In doing so, the contribution to the distance between two models given
by disagreeing on the truth value of some formula $\varphi\in\mathcal{L}$
will simply be $w(\boldsymbol{\varphi})$ for $\boldsymbol{\varphi}\in\boldsymbol{\mathcal{L}}_{\Lambda}$.
The alternative would be to use sets of $\mathcal{L}$-formulas directly.
This however requires either picking descriptors containing no two
equivalent formulas, or suffering double-counting. We find the suggested
most appealing.
\begin{defn*}
Let $X$ be a set of pointed Kripke models and let $\Lambda$ be a
logic sound with respect to $X$. Then a \textbf{descriptor for }$X$
is any set $D\subseteq\boldsymbol{\mathcal{L}}_{\Lambda}$.
\end{defn*}
\begin{rem}
\label{rem:Soundness}The requirement that $\Lambda$ be sound with
respect to $X$ is needed to ensure the metrics well-defined: It ensures
that for all $x\in X$, if $x\models\varphi$, then for all $\varphi'\in\boldsymbol{\varphi}$,
$x\models\varphi'$. I.e., $x$ cannot be in disagreement with itself
about the valuation of $\boldsymbol{\varphi}$.
\end{rem}

The choice of descriptor has implications on which $\Lambda$-propositions
are taken into account for the metric. Chosing e.g. the set of atomic
propositions as restrictor, will result in a rather coarse perspective.
We will be particularly interested in descriptors that have the same
expressive power as $\mathcal{L}$ (or $\boldsymbol{\mathcal{L}}_{\Lambda}$)
itself:
\begin{defn*}
\label{def:Repr.Descriptor} Say that $D\subseteq\boldsymbol{\mathcal{L}}_{\Lambda}$
is $\Lambda$\textbf{-representative} if, for every $\varphi\in\boldsymbol{\mathcal{L}}$,
there is a set $\{\boldsymbol{\psi}_{i}\}_{i\in I}\subseteq D$ such
that for all sets $S=\{\psi_{i}\}_{i\in J}\cup\{\neg\psi_{i}\}_{i\in I\setminus J}$
with $J\subseteq I$ either $\varphi$ or $\neg\varphi$ is $\Lambda$-entailed
by $S$.
\end{defn*}
The main implication of a descriptor being representative is given
in Lemma \ref{lem:Repr.Descriptor} below. A strict subset of $\boldsymbol{\mathcal{L}}_{\Lambda}$
which is $\Lambda$-representative is presented in Example \ref{ex:n-bisim.metric}. 

\subsection{Modal Spaces}

As stated in Section \ref{sec:MetricsFormalStr}, we construct metrics
on sets of structures \emph{modulo} logical equivalence. The choice
to use a proof-theoretic over a semantic quotient is motivated by
general applicability: The notion of a sound logic in a language evaluated
over a set of structures is conceptually uniform, while the semantic
concept characterizing structural identity suited to the language
in question may be highly variable.\footnote{Compare e.g. isomorphism as an identity concept for first-order languages
with bisimulation suited for standard modal languages and again with
the many specialized versions of bisimulation suited to non-standard
modal languages. See also Example \ref{ex:n-bisim.metric}.} 

In so doing, we follow \cite{KleinRendsvig-LORI2017} in referring
to \emph{modal spaces}:
\begin{defn*}
With $X$ a set of pointed Kripke models and $D$ a descriptor for
$X$,\linebreak{}
 the $D$\textbf{-modal space of $X$ }is denoted $\boldsymbol{X}_{D}$
and is the set $\{\boldsymbol{x}_{D}\colon x\in X\}$ with $\boldsymbol{x}_{D}=\{y\in X:\forall\boldsymbol{\varphi}\in D,y\models\varphi\text{ iff }x\models\varphi\}$.
\end{defn*}
\noindent The subscript of $\boldsymbol{x}_{D}$ is omitted when the
descriptor is clear from context.\medskip{}

The choice of descriptor influence the resulting modal space: $\boldsymbol{X}_{D}$
may be a more or less coarse partition of $X$, with two extremes:
If the descriptor is $\mathcal{L}_{\Lambda}$, the finest partition
is achieved: $\boldsymbol{X}_{\mathcal{L}_{\Lambda}}$, the quotient
of $X$ under $\Lambda$-equivalence. For the coarsest partition,
choose $\{\top\}$ as descriptor: $X_{\{\top\}}$ is simply $\{X\}$.

We are mainly interested in modal spaces that retain the structure
of $X$ as seen by a logic $\Lambda$, i.e., $\boldsymbol{X}_{\mathcal{L}_{\Lambda}}$.
This does not entail that $\mathcal{L}_{\Lambda}$ is the only descriptor
of interest. Others are sufficient:
\begin{lem}
\label{lem:Repr.Descriptor} If $D\subseteq\mathcal{L}_{\Lambda}$
is a $\Lambda$-representative descriptor for $X$, then $\boldsymbol{X}_{D}$
is identical to $\boldsymbol{X}_{\mathcal{L}_{\Lambda}}$, i.e., for
all $x,y\in X$, $y\in\boldsymbol{x}_{D}$ iff $y\in\boldsymbol{x}_{\mathcal{L}_{\Lambda}}$.
\end{lem}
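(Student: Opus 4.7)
The plan is to prove the two inclusions separately. The direction $\boldsymbol{x}_{\mathcal{L}_\Lambda}\subseteq\boldsymbol{x}_D$ is immediate: if $y$ agrees with $x$ on the truth value of every $\boldsymbol{\varphi}\in\mathcal{L}_\Lambda$, then in particular they agree on every $\boldsymbol{\psi}\in D$ since $D\subseteq\mathcal{L}_\Lambda$. So the content is in the converse $\boldsymbol{x}_D\subseteq\boldsymbol{x}_{\mathcal{L}_\Lambda}$.

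For the nontrivial direction, I would fix $y\in\boldsymbol{x}_D$ and an arbitrary $\boldsymbol{\varphi}\in\mathcal{L}_\Lambda$, and show $x\models\varphi\iff y\models\varphi$. Apply $\Lambda$-representativeness of $D$ to $\varphi$ to obtain $\{\boldsymbol{\psi}_i\}_{i\in I}\subseteq D$ with the stated entailment property. Now read off from $x$ the ``signature'' $J_x=\{i\in I:x\models\psi_i\}$, and let $S_x=\{\psi_i\}_{i\in J_x}\cup\{\neg\psi_i\}_{i\in I\setminus J_x}$. By construction, $x$ satisfies every formula in $S_x$. Representativeness gives $S_x\vdash_\Lambda\varphi$ or $S_x\vdash_\Lambda\neg\varphi$; by soundness of $\Lambda$ with respect to $X$ (invoked as in Remark \ref{rem:Soundness}), this determines whether $x\models\varphi$.

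The crux then is that $y$ has the same signature: since $y\in\boldsymbol{x}_D$ and each $\boldsymbol{\psi}_i\in D$, we have $y\models\psi_i\iff x\models\psi_i$, so $J_y=J_x$ and hence $y$ also satisfies every formula in $S_x$. Applying the same $\Lambda$-entailment and soundness to $y$ yields the same verdict on $\varphi$, so $x\models\varphi\iff y\models\varphi$. Since $\boldsymbol{\varphi}$ was arbitrary, $y\in\boldsymbol{x}_{\mathcal{L}_\Lambda}$.

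The only real obstacle is technical: the set $\{\boldsymbol{\psi}_i\}_{i\in I}$ in the definition of representativeness may be infinite, so the notion of $\Lambda$-entailment used must allow infinite premise sets, and the soundness assumption must be strong enough to transfer such entailment to satisfaction in a pointed Kripke model. Both are standard and already tacit in Remark \ref{rem:Soundness}, so no further machinery is needed. Everything else is a bookkeeping argument showing that agreement on $D$ propagates to agreement on every $\boldsymbol{\varphi}\in\mathcal{L}_\Lambda$ through the ``truth-table-like'' witness set supplied by representativeness.
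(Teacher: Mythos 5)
Your proof is correct and follows essentially the same route as the paper's: both use the witness set $\{\boldsymbol{\psi}_i\}_{i\in I}$ from representativeness, observe that $x$ and $y$ induce the same signature $S$ because they agree on $D$, and transfer the $\Lambda$-entailment of $\varphi$ (or $\neg\varphi$) from $x$ to $y$ via soundness, with the converse inclusion being immediate from $D\subseteq\boldsymbol{\mathcal{L}}_{\Lambda}$. Your version is in fact slightly more explicit than the paper's about where soundness and the possibly infinite premise set enter, but the underlying argument is the same.
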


\begin{proof}
 We first show that $y\in\boldsymbol{x}_{D}$ entails $y\in\boldsymbol{x}_{\mathcal{L}_{\Lambda}}$.
Assume $y\in\mathbf{x}_{D}.$ To show that $y\in\boldsymbol{x}{}_{\mathcal{L}_{\Lambda}}$.
we need to prove that for all $\varphi\in\Lambda$ holds $x\vDash\varphi\Leftrightarrow y\vDash\varphi$.
We only show the left-to-right implication, the other direction being
similar. Assume $x\vDash\varphi$. Since D is representative, there
is a set $\{\boldsymbol{\psi}_{i}\}_{i\in I}\subseteq D$ such that
$\{\chi_{i}\}_{i\in I}\vDash\varphi$ where$\chi_{i}=\psi_{i}$ iff
$x\vDash\psi_{i}$and $\chi_{i}=\neg\psi_{i}$ else. Since $y\in\mathbf{x}_{D}$
and $\boldsymbol{\psi}_{i}\in D$, we have$y\vDash\chi_{i}$ for all
$i\in I$. Hence also $y\vDash\varphi$. 

Next we show that that $y\in\boldsymbol{x}_{\mathcal{L}_{\Lambda}}$
entails $y\in\boldsymbol{x}_{D}$ . Assume $y\in\boldsymbol{x}{}_{\mathcal{L}_{\Lambda}}\in X\boldsymbol{}{}_{\mathcal{L}_{\Lambda}}$
In hence holds that $x\vDash\varphi\Leftrightarrow y\vDash\varphi$.
for all $\varphi\in\Lambda$ . In particular, $x\vDash\varphi\Leftrightarrow y\vDash\varphi$.
for all $\varphi$ with $\boldsymbol{\varphi}\in D$ which implies
that$y\in\mathbf{x}_{D}.$
\end{proof}
\begin{rem}
When we assume a descriptor representative, we state so. Though modal
spaces for representative descriptor are of prime interest, for several
results the assumption is not necessary.
\end{rem}

\subsection{Metrics on Modal Spaces}

Finally, we obtain the family $\mathcal{D}_{(X,D)}$ of metrics on
the $D$-modal space of a set of pointed Kripke models $X$:
\begin{prop}
\label{prop:Metric}Let $D$ be an enumerated descriptor for the set
of pointed Kripke models $X$. Let $\nu:\boldsymbol{X}_{D}\times D\rightarrow\{0,1\}$
be a valuation given by $\nu(\boldsymbol{x},\boldsymbol{\varphi})=1$
iff $x\models\varphi$ for all $\boldsymbol{x}\in\boldsymbol{X}_{D},\boldsymbol{\varphi}\in D$.
Let $w:D\rightarrow\mathbb{R}_{>0}$ we a weight function. Then $d_{w}$
is a metric on $\boldsymbol{X}_{D}$.
\end{prop}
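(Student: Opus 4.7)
The plan is to derive this proposition as a direct instance of Proposition \ref{prop:structure-metric}. That earlier proposition already establishes that $d_w$ is a metric on $\boldsymbol{X}_D$ whenever $\nu\colon X\times D\to V$ satisfies Definition \ref{def:valuation}, i.e., whenever $D$ is countable and $\nu$ is a genuine (well-defined) function. The work therefore reduces to checking these two hypotheses for the specific setup of the statement.

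First, I would observe that $D$ is countable: by definition, a descriptor for $X$ is a subset of $\boldsymbol{\mathcal{L}}_{\Lambda}$, and $\boldsymbol{\mathcal{L}}_{\Lambda}$ is a partition of the countable language $\mathcal{L}$, hence itself countable. So any descriptor $D$ is countable and admits the enumeration assumed in the statement, which gives meaning to the weight condition $\sum_k w(\varphi_k)<\infty$.

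Second, and this is the only substantive point, I would verify that $\nu(\boldsymbol{x},\boldsymbol{\varphi})=1$ iff $x\models\varphi$ defines a function on $\boldsymbol{X}_{D}\times D$. The definition involves two representative choices and both must be shown immaterial. Independence from the representative $\varphi$ of $\boldsymbol{\varphi}\in D$ uses the soundness hypothesis built into the notion of a descriptor (Remark \ref{rem:Soundness}): if $\varphi,\varphi'\in\boldsymbol{\varphi}$, then $\varphi$ and $\varphi'$ are $\Lambda$-provably equivalent, so soundness of $\Lambda$ with respect to $X$ yields $x\models\varphi$ iff $x\models\varphi'$ for every $x\in X$. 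Independence from the representative $x$ of $\boldsymbol{x}\in\boldsymbol{X}_{D}$ is immediate from the definition of the equivalence classes $\boldsymbol{x}_{D}$: by construction, any two $x,y\in\boldsymbol{x}_{D}$ agree on the truth value of $\varphi$ whenever $\boldsymbol{\varphi}\in D$.

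With $D$ countable and $\nu$ well-defined, Proposition \ref{prop:structure-metric} applies directly to yield that $d_w\in\mathcal{D}_{(X,\nu,D)}$ is a metric on $\boldsymbol{X}_{D}$. The only step with any content is the well-definedness check, and even there the hard work is done by the soundness clause in the definition of descriptor; no genuine new obstacle arises.
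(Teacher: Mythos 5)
Your proposal is correct and follows essentially the same route as the paper: the paper's own proof is precisely the reduction to Proposition \ref{prop:structure-metric} together with the observation that $\nu$ is well-defined by the soundness requirement of Remark \ref{rem:Soundness}. You merely spell out the two representative-independence checks that the paper leaves implicit.
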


\begin{proof}
This follows immediately from Proposition \ref{prop:structure-metric}
as $\nu$ is well-defined, cf. Remark \ref{rem:Soundness}.
\end{proof}
\begin{cor}
For every $X$-descriptor $D$, $\mathcal{D}_{(X,D)}$ is a family
of metrics on $\boldsymbol{X}_{D}$.
\end{cor}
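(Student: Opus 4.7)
The plan is almost entirely bookkeeping: unpack the definitions of $\mathcal{D}_{(X,D)}$ and of a family of metrics, and then invoke Proposition \ref{prop:Metric} elementwise. Concretely, recall that $\mathcal{D}_{(X,D)}$ was defined (via the abbreviation $\mathcal{D}_{(X,\nu,D)}$ with the now-suppressed canonical valuation $\nu$) as the collection of maps $d_{w}:\boldsymbol{X}_{D}\times\boldsymbol{X}_{D}\rightarrow\mathbb{R}$ obtained as $w$ ranges over all weight functions $w:D\rightarrow\mathbb{R}_{>0}$ whose enumerated values form a convergent series. So the proposal reduces to checking that each element of this collection is a metric on the common set $\boldsymbol{X}_{D}$.

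First I would fix an arbitrary $d_{w}\in\mathcal{D}_{(X,D)}$ and note that the associated $w$ is, by the defining condition on $\mathcal{D}_{(X,D)}$, a legitimate weight function in the sense required by Proposition \ref{prop:Metric}. Second, I would cite Proposition \ref{prop:Metric} directly to conclude that this particular $d_{w}$ is a metric on $\boldsymbol{X}_{D}$. Since $d_{w}$ was an arbitrary member of $\mathcal{D}_{(X,D)}$, the whole collection consists of metrics on the single space $\boldsymbol{X}_{D}$, which is the content of the corollary.

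The only subtle point \textemdash{} and the closest thing to an obstacle \textemdash{} is making sure that Proposition \ref{prop:Metric}'s standing hypotheses apply uniformly across all $d_{w}$: namely, that the underlying valuation $\nu$ and the descriptor $D$ are fixed once and for all, while only $w$ varies. This is immediate from the definitions, but it is worth mentioning explicitly so that the reader sees that $\boldsymbol{X}_{D}$ really is a common ambient space for every $d_{w}\in\mathcal{D}_{(X,D)}$, and that the well-definedness of $\nu$ (guaranteed by soundness of $\Lambda$, cf. Remark \ref{rem:Soundness}) does not depend on the particular choice of $w$. With that observation noted, the corollary is a one-line consequence of the preceding proposition.
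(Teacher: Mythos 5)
Your proposal is correct and matches the paper's intent exactly: the corollary is stated as an immediate consequence of Proposition \ref{prop:Metric}, obtained by applying that proposition to each $d_{w}\in\mathcal{D}_{(X,D)}$ as $w$ ranges over all weight functions on the fixed descriptor $D$. The paper gives no separate argument, and your elementwise unpacking (including the remark that well-definedness of $\nu$ via soundness is independent of $w$) is precisely the routine verification being left implicit.
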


\subsection{Examples\label{subsec:Examples}}

In constructing a metric $d_{w}\in\mathcal{D}_{(X,D)}$ for some modal
space $\boldsymbol{X}_{D}$, two parameters must be fixed: The descriptor
and the weight function. Jointly, these two parameters allow much
freedom in picking a metric according to desired properties. In this
section, we provide three classes of examples: First of non-representative
descriptors, second of representative descriptors, and third of representative
descriptors on finite sets, where we by a general proposition prove
previous metrics on pointed Kripke models \cite{aucher2010generalizing,Caridroit2016}
special cases of our approach.

\subsubsection{Non-Representative Descriptors}
\begin{example}
\textbf{Hamming Distance on Partial Atom Valuation.} 

\noindent Let $\mathcal{L}$ be a modal language and $K$ and $X$
respectively the minimal normal modal logic and a set of pointed Kripke
models for $\mathcal{L}$. Let $p_{1},p_{2},...$ be an enumeration
of the atoms of $\mathcal{L}$. Pick as descriptor $D=\{\boldsymbol{p}_{1},...,\boldsymbol{p}_{n}\}\subseteq\boldsymbol{\mathcal{L}}_{K}$
and weight function $w$ given by $w(\boldsymbol{p}_{k})=1$ for all
$\boldsymbol{p}_{k}\in D$. Then $d_{w}$ is a metric on $\boldsymbol{X}_{D}$
cf. Prop. \ref{prop:Metric}. The metric space $(\boldsymbol{X}_{D},d_{w})$
is isomorphic to the metric space of strings of length $n$ under
the Hamming distance. In it, pointed Kripke models are compared only
by their valuation of the first $n$ atoms. The space and the underlying
metric reflects no modal structure.

If the set of atoms $\Phi$ of $\mathcal{L}$ is countably infinite,
then we \emph{cannot} assign all atoms equal weight: The sequence
$(w'(\boldsymbol{p}_{n}))_{n\in\mathbb{N}}$ would not give rise to
a convergent series, so $w'$ is not a weight function. Partitioning
$\Phi$ into cells $P_{1},P_{2},...$ with each $P_{k},k\in\mathbb{N}$
finite but arbitrarily large, and assigning $w''(\boldsymbol{p})=a_{k}$
for all $\boldsymbol{p}\in P_{k}$ with $a_{k}$ the $k$-th term
of some convergent series does, however, give rise to a weight function.
\end{example}

\begin{example}
\textbf{World Views and Situation Similarity.} 

\noindent Consider an agent, $a$, who cares only about her beliefs
about some of atom $p$ and her beliefs about the beliefs of another
agent, $b$, about the same. Working in a doxastic $KD45$ logic with
operators $B_{a}$ and $B_{b}$, agent $a$'s world view may be described
by $D=\{\boldsymbol{B_{a}\varphi},\boldsymbol{B_{a}\neg\varphi},\boldsymbol{\neg B_{a}\varphi\wedge\neg B_{a}\neg\varphi}\}$
with $\varphi\in\{p,B_{b}p,B_{b}\neg p\}$. Similarities in situations
(pointed Kripke models) from the viewpoint of $a$ may then be represented
by using weight functions and their distances. E.g.: If $a$ cares
equally much about her own and $b$'s beliefs, every element of $D$
may be given weight; If she cares less about $b$'s beliefs, $D$
may be suitably partitioned and weighted; Etc.
\end{example}

\subsubsection{Representative Descriptors}
\begin{example}
\textbf{Degrees of Bisimilarity.}\label{ex:n-bisim.metric}

\noindent  Contrary to the logico-syntactic approach to metric construction,
a natural semantic approach rests on \textbf{bisimulation}. In particular,
the notion of $n$\textbf{-bisimularity} may be used to define a semantically
based metric on quotient spaces of pointed Kripke models where degrees
of bisimilarity translate to closeness in space\textemdash the more
bisimilar, the closer:
\end{example}

Let $X$ be a set of pointed Kripke models for which modal equivalence
and bisimilarity coincide\footnote{That all models in $X$ are \textbf{image-finite} is a sufficient
condition, cf. the Hennessy-Milner Theorem. See e.g. \cite{BlueModalLogic2001}
or \cite{ModelTheoryModalLogic}.} and let $\leftrightarroweq_{n}$ relate $x,y\in X$ iff $x$ and
$y$ are $n$-bisimilar. Then

\setstretch{.8}
\begin{equation}
d_{B}(\boldsymbol{x},\boldsymbol{y})=\begin{cases}
0 & \text{ if }x\leftrightarroweq_{n}y\text{ for all }n\\
\frac{1}{n} & \text{ if }n\text{ is the least intenger such that }x\not\leftrightarroweq_{n}y
\end{cases}\label{eq:n-bisim}
\end{equation}
\setstretch{1.25}is a metric on $\boldsymbol{X}_{\boldsymbol{\mathcal{L}}_{K}}$.\footnote{The metric is inspired by \cite{Goranko2004}, defining a distance
between theories of first-order logic using quantifier depth, to which
we return in Section \ref{sec:n-bisim-topology}. Also aiming at a
bisimulation-based metric is the ``$n$-Bisimulation-based Distance''
of \cite{Caridroit2016}, which yields a pseudo-metric on sets of
finite, pointed Kripke models (see also Sec. \ref{subsec:Metrics-on-Finite}
below).} We refer to $d_{B}$ as the \textbf{$n$-bisimulation metric}.

For $X$ and $\mathcal{L}$ based on a finite signature, we have $d_{B}\in\mathcal{D}_{(X,D)}$,
i.e. the $n$-bisimulation metric is contained in the family introduced:
Note that each model in $X$ has a \textbf{characteristic formula}
up to $n$-bisimulation. I.e., for each $x\in X$, there exists a
$\varphi_{x,n}\in\mathcal{L}$ such that for all $y\in X,$ $y\models\varphi_{x,n}$
iff $x\leftrightarroweq_{n}y$, cf. \cite{moss2007finite,ModelTheoryModalLogic}.
Given that both $\Phi$ and $\mathcal{I}$ are finite, so is, for
each $n$, the set $D_{n}=\{\boldsymbol{\varphi_{x,n}}:x\in X\}\subseteq\boldsymbol{\mathcal{L}}_{K}$
with $K$ the minimal normal modal logic. Pick the set of descriptors
to be $D=\bigcup_{n\in\mathbb{N}}D_{n}$. Then $D$ is $K$-representative,
so $\boldsymbol{X}_{D}$ is identical to $\boldsymbol{X}_{\boldsymbol{\mathcal{L}}_{K}}$,
cf. Lemma \ref{lem:Repr.Descriptor}.

Let the weight function $b$ be given by
\[
b(\boldsymbol{\varphi})=\frac{1}{2}\left(\frac{1}{n}-\frac{1}{n+1}\right)\text{ for }\boldsymbol{\varphi}\in D_{n}.
\]

\noindent Hence $d_{b}$, defined by
\[
d_{b}(\boldsymbol{x},\boldsymbol{y})=\sum_{k=0}^{\infty}b(\boldsymbol{\varphi}_{k})\cdot d_{k}(\boldsymbol{x},\boldsymbol{y}),
\]
is a metric on $\boldsymbol{X}_{\boldsymbol{\mathcal{L}}_{K}}$ cf.
\ref{prop:Metric}. As models $x$ and $y$ will, for all $n$, either
agree on all members of $D_{n}$ or disagree on exactly 2 (namely
$\varphi_{n,x}$ and $\varphi_{n,y}$) and as, for all $k\leq n$,
$y\models\varphi_{n,x}$ implies $y\models\varphi_{k,x}$, and for
all $k\geq n$, $y\not\models\varphi_{n,x}$ implies $y\not\models\varphi_{k,x}$,
we obtain that

\setstretch{.8}
\[
d_{b}(\boldsymbol{x},\boldsymbol{y})=\begin{cases}
0 & \text{ if }x\leftrightarroweq_{n}y\text{ for all }n\\
\sum_{k=n}^{\infty}2\cdot\frac{1}{2}\left(\frac{1}{k}-\frac{1}{k+1}\right)=\frac{1}{n} & \text{ if }n\text{ is the least intenger such that }x\not\leftrightarroweq_{n}y
\end{cases}
\]
\setstretch{1.25} which is exactly $d_{B}$.
\begin{rem}
The construction given for encoding of the $n$-bisimulation metric
only works when the set of atoms and number of modalities are finite:
No metric in $\mathcal{D}_{(X,\boldsymbol{\mathcal{L}}_{K})}$ is
equivalent with the $n$-bisimulation metric in the case of infinitely
many atoms, cf. Section \ref{sec:n-bisim-topology}.
\end{rem}

\begin{example}
\textbf{Close to Home, Close to Heart.}\label{ex:org.metric}

\noindent The distances $d_{B}$ and $d_{b}$ do not reflect all differences
between models. For example, if two models are not $n$-bisimilar
due only to atomic disagreement $n$ steps from the designated state,
then it does not matter on how many atoms or how many worlds at distance
$n$ they disagree: Their distance will be $\frac{1}{n}$ in all cases.
Likewise, no differences they exhibit beyond the $n$th step will
influence their distance: Only the first difference matters.
\end{example}

In $\mathcal{D}_{(X,\boldsymbol{\mathcal{L}}_{K})}$, we find a metric
which retains the feature of $d_{b}$ that differences further from
the designated state weighs less than differences closer, but which
assigns a positive weight to every modal proposition. In a slogan:
\begin{quote}
\emph{All and only modally expressible difference matters, but the
further you have to go to find it, the less it matters}.
\end{quote}
On a set of finite atom models $X$, a metric that lives up to the
slogan may be defined as follows:

Take the descriptor to be $\boldsymbol{\mathcal{L}}_{K}$. Let $\{D_{n}\}_{n\in\mathbb{N}}$
be a partition of $D$ by \textbf{shallowest modal depth}: For $n\in\mathbb{N}$,
let $D_{n}$ contain the $K$-propositions $\boldsymbol{\varphi}$
for which the the shallowest $K$-representative $\chi\in\boldsymbol{\varphi}$
have modal depth $n$. I.e., with $md(\varphi)$ the modal depth of
$\varphi$,
\[
D_{n}=\{\boldsymbol{\varphi}\in D:\exists\chi\in\boldsymbol{\varphi},(md(\chi)=n)\mbox{ and }\forall\psi\in\boldsymbol{\varphi},(md(\psi)\geq n)\}.
\]
Define a weight function $c$ by
\[
c(\boldsymbol{\varphi})=\frac{1}{|D_{n}|}\frac{1}{\prod_{k<n}|D_{k}|}\frac{1}{2^{n}}\text{ for }\boldsymbol{\varphi}\in D_{n}.
\]
Then $d_{c}$ is a metric on $\boldsymbol{X}_{\boldsymbol{\mathcal{L}}_{K}}$.

The first term ensures that disagreement on any formula in $D_{n}$
contributes $\frac{1}{\prod_{k<n}|D_{k}|}\frac{1}{2^{n}}$ to the
distance between models. The second term ensures that the summed weight
of all formulas in $D_{j}$ for $j>n$ is less than or equal to the
weight of any $D_{n}$ formula, even when $|D_{j}|>|D_{n}|$. The
third term ensures that the summed weights will not be equal: One
disagreement on a single formula of modal depth $n$ adds more to
the distance between two models than do disagreement on all formulas
of modal depth $n+1$ and above. Formally, for all $n$,
\begin{equation}
\frac{1}{2^{n}}\frac{1}{|D_{n}|}\frac{1}{\prod_{k<n}|D_{k}|}>\sum_{m=n+1}^{\infty}\frac{1}{2^{m}}\frac{|D_{m}|}{|D_{m}|}\frac{1}{\prod_{k<n}|D_{k}|}.\label{eq:less}
\end{equation}
Given this features, the metric $d_{\vec{c}}$ captures both aspects
the slogan:
\begin{enumerate}
\item Given that every cell in $\boldsymbol{\mathcal{L}}_{K}$ is given
positive weight, and that only disagreement on these cells contribute
to the distance between model, all and only modally expressible differences
matter.
\item That further distance from the designated world should imply less
importance of difference is captured as Eq. (\ref{eq:less}) implies
that for any $x,y,z\in X$, if $x$ and $y$ are not $n$-modally
equivalent but $x$ and $z$ are, then $d_{c}(\boldsymbol{x},\boldsymbol{y})>d_{c}(\boldsymbol{x},\boldsymbol{z}).$
\end{enumerate}

\subsubsection{Metrics on Finite Sets \label{subsec:Metrics-on-Finite}}

As a last example, consider the case where $X$ and $\Lambda$ are
such that $\ms$ is of finite cardinality. This may happen e.g. in
a language with a single operator and finite atoms under $S5$ equivalence,
or if $X$ itself is finite, as is explicitly assumed in \cite{Caridroit2016}
when Cardroit \emph{et. al} define their $6$ distances between pointed
Kripke models. In this setting, for\textit{ any} metric $d$ on $\ms$
there is an equivalent metric $d_{b}\in\mathcal{D}_{(X,D)}$ such
that the spaces $(\ms,d)$ and $(\ms,d_{b})$ are quasi-isometric
to each other.
\begin{prop}
\label{prop:replem} Let $(\ms,d)$ be a finite metric space. Then
there exists a descriptor $D\subseteq\boldsymbol{\mathcal{L}}_{\Lambda}$
, a metric $d_{w}\in\mathcal{D}_{(X,D)}$ and some $c\geq0$ such
that $d_{w}(\boldsymbol{x}_{D},\mathbf{y}_{D})=d(\boldsymbol{x}_{\mathcal{L}_{\Lambda}},\boldsymbol{y}_{\mathcal{L}_{\Lambda}})+c$
for all $\boldsymbol{x}\neq\boldsymbol{y}\in\ms$.  In particular,
$(\boldsymbol{X}_{D},d_{w})$ are $(\ms,d)$ quasi-isometric to each
other. 
\end{prop}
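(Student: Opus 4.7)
The plan is to build a descriptor out of characteristic formulas for the finitely many classes in $\ms$ and then to realize $d + c$ as a positively-weighted sum of cut indicator functions. First, enumerate $\ms = \{\boldsymbol{x}_1, \ldots, \boldsymbol{x}_N\}$. Since distinct classes are $\Lambda$-inequivalent, a finite conjunction of separating formulas yields, for each $i$, a characteristic formula $\chi_i \in \mathcal{L}$ with $\boldsymbol{x}_j \models \chi_i$ iff $i = j$. For each non-empty $A \subseteq \{1,\ldots,N\}$, set $\psi_A := \bigvee_{i \in A} \chi_i$, whose truth set in $\ms$ is exactly $\{\boldsymbol{x}_i : i \in A\}$. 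Take $D := \{\boldsymbol{\psi}_A : A \in \mathcal{F}\}$ for $\mathcal{F} := \{\{i\} : 1 \leq i \leq N\} \cup \{\{i,j\} : 1 \leq i < j \leq N\}$. Since $\{\boldsymbol{\chi}_i\}_i \subseteq D$ separates every pair of distinct classes, $D$ is $\Lambda$-representative, so $\boldsymbol{X}_D = \ms$ by Lemma \ref{lem:Repr.Descriptor}.

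The disagreement function attached to $\boldsymbol{\psi}_A$ on distinct pairs is the cut indicator $\delta_A(i,j) := \mathbf{1}[|A \cap \{i,j\}| = 1]$. The crucial step is to show that $\{\delta_A : A \in \mathcal{F}\}$ spans the $\binom{N}{2}$-dimensional space of symmetric functions on the distinct pairs of $\ms$. I verify this by duality: if $f$ satisfies $\sum_{i<j} f(i,j)\, \delta_A(i,j) = 0$ for every $A \in \mathcal{F}$, then testing against singletons $A = \{k\}$ gives $\sum_{j \neq k} f(k,j) = 0$ for each $k$, and substituting these identities into the pair test $A = \{k,\ell\}$ collapses the sum to $-2 f(k,\ell) = 0$, forcing $f \equiv 0$.

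Given spanning, we write $d = \sum_{A \in \mathcal{F}} \mu_A \delta_A$ for some real $\mu_A$. A direct count shows $\sum_{A \in \mathcal{F}} \delta_A(\boldsymbol{x}_i, \boldsymbol{x}_j) = 2(N-1)$ for every $i \neq j$, so $\mathbf{1} = \tfrac{1}{2(N-1)} \sum_A \delta_A$ off the diagonal. Pick $c \geq 0$ large enough that $w_A := \mu_A + c/(2N-2) > 0$ for every $A \in \mathcal{F}$, and define $w(\boldsymbol{\psi}_A) := w_A$. Since $D$ is finite, $w$ is a valid weight function and $d_w(\boldsymbol{x}_i, \boldsymbol{x}_j) = \sum_A w_A \delta_A(i,j) = d(\boldsymbol{x}_i, \boldsymbol{x}_j) + c$ for all $i \neq j$. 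Quasi-isometry of $(\ms, d)$ and $(\boldsymbol{X}_D, d_w)$ via the identity map is then immediate from the bound $d \leq d_w \leq d + c$.

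The main obstacle is the spanning claim; once in hand, the additive shift to obtain strictly positive weights is routine bookkeeping. Notably, the argument never invokes the triangle inequality for $d$, so it actually shows that any real-valued symmetric function on the off-diagonal pairs of a finite $\ms$ is realizable up to an additive constant as some $d_w \in \mathcal{D}_{(X, D)}$.
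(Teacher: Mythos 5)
Your proof is correct, and it shares the paper's combinatorial skeleton exactly: the descriptor consisting of characteristic formulas for singletons and for unordered pairs of classes is the same one the paper uses, and the observation that two distinct classes disagree precisely on $\boldsymbol{\varphi}_{\boldsymbol{x}},\boldsymbol{\varphi}_{\boldsymbol{y}}$ and the mixed pair-formulas is the engine of both arguments. Where you genuinely diverge is in how the weights are produced. The paper partitions $\ms\times\ms$ by distance value and writes down explicit weights $\tfrac{1+b_k-b_i}{4}$ (and, for singleton formulas, a sum of such terms), then verifies by direct computation that $d_w(\boldsymbol{x},\boldsymbol{y})=d(\boldsymbol{x},\boldsymbol{y})+c$ with $c=2a-1-b_k$; positivity of the weights is immediate from $b_i\leq b_k$. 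You instead prove an existence statement by linear algebra: the cut indicators $\delta_A$ for singletons and pairs span the full $\binom{N}{2}$-dimensional space of symmetric off-diagonal functions (your duality computation $-2f(k,\ell)=0$ is correct), so $d$ is some real combination $\sum\mu_A\delta_A$, and since $\sum_A\delta_A$ is the constant $2(N-1)$ off the diagonal, adding a suitable $c$ shifts all coefficients into $\mathbb{R}_{>0}$. Your route buys generality and conceptual clarity --- as you note, it realizes \emph{any} symmetric off-diagonal function up to an additive constant, making transparent that the triangle inequality plays no role --- at the cost of non-explicit weights (they come from solving a linear system) and a possibly larger additive constant $c$ than the paper's explicit one. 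Both establish the quasi-isometry claim in the same trivial way once $d_w=d+c$ is in hand. The only step you share with the paper that is stated a bit loosely is the identification $\boldsymbol{X}_D=\ms$ via $\Lambda$-representativity; it is cleaner (and sufficient) to observe directly that each model in $X$ satisfies exactly the characteristic formula of its own class, so $D$-equivalence coincides with $\boldsymbol{\mathcal{L}}_{\Lambda}$-equivalence on $X$.
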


\begin{proof}
Since $\ms$ is finite, there is a $\varphi_{\boldsymbol{x}}$ for
each $\boldsymbol{x}\in\ms$ such that for all $y\in X$, if $y\models\varphi_{\boldsymbol{x}}$,
then $y\in\boldsymbol{x}$. Moreover, let $\varphi_{\{\boldsymbol{x},\boldsymbol{y}\}}$
denote the formula $\varphi_{\boldsymbol{x}}\vee\varphi_{\boldsymbol{y}}$
which holds true in $\boldsymbol{x}\in\ms$ iff $\boldsymbol{z}=\boldsymbol{x}$
or $\boldsymbol{z}=\boldsymbol{y}$. Let $D=\{\boldsymbol{\varphi}_{\boldsymbol{x}}\colon x\in X\}\cup\{\boldsymbol{\varphi_{\{\boldsymbol{x},\boldsymbol{y}\}}}\colon x,y\in X\}$.
It follows that $\boldsymbol{X}_{D}=\ms$.

Next, partition the finite set $\ms\times\ms$ according to the metric
$d$: Let $S_{1},...,S_{k}$ be the unique partition of $\ms\times\ms$
that satisfies, for all $i,j\leq k$
\begin{enumerate}
\item If $(\boldsymbol{x},\boldsymbol{x'})\in S_{i}$ and $(\boldsymbol{y},\boldsymbol{y'})\in S_{i}$,
then $d(\boldsymbol{x},\boldsymbol{x'})=d(\boldsymbol{y},\boldsymbol{y'})$,
and
\item If $(\boldsymbol{x},\boldsymbol{x'})\in S_{i}$ and $(\boldsymbol{y},\boldsymbol{y'})\in S_{j}$
for $i<j$, then $d(\boldsymbol{x},\boldsymbol{x'})<d(\boldsymbol{y},\boldsymbol{y'})$.
\end{enumerate}
For $i\leq k$, let $b_{i}$ denote $d(x,y)$ for any $(x,y)\in S_{i}$.
Define a weight function $w:D\rightarrow\mathbb{R}_{>0}$ by 
\begin{align*}
w(\boldsymbol{\varphi_{x}}) & =\sum_{i=1}^{k}\sum_{\substack{(y,z)\in S_{i}\\
x\not\neq y,z
}
}\frac{1+b_{k}-b_{i}}{4}\\
w(\boldsymbol{\varphi_{\{x,y\}}}) & =2\cdot\frac{1+b_{k}-b_{i}}{4}\text{ for the }i\text{ with }(x,y)\in S_{i}
\end{align*}
Note that by symmetry, $(x,y)\in S_{i}$ implies $(y,x)\in S_{i}$,
thus $w(\varphi_{\{x,y\}})$ is well-defined. We get for each $x$
that 
\begin{align*}
w(\varphi_{x})+\sum_{y\neq x}w(\varphi_{\{x,y\}})=\sum_{i=1}^{k}\sum_{\substack{(y,z)\in S_{i}\\
x\not\in\{y,z\}
}
}\frac{1+b_{k}-b_{i}}{4}+\sum_{i=1}^{k}\sum_{\substack{(y,z)\in S_{i}\\
x\in\{y,z\}
}
}\frac{1+b_{k}-b_{i}}{4}=\sum_{i=1}^{k}\sum_{(y,z)\in S_{i}}\frac{1+b_{k}-b_{i}}{4}
\end{align*}
For simplicity, we denote the rightmost term $\sum_{i=1}^{k}\sum_{(y,z)\in S_{i}}\frac{1+b_{k}-b_{i}}{4}$
of the previous equation by $a$. Next, note that two models $x$
and $y$ differ on exactly the formulas $\varphi_{x},\varphi_{y}$
and all $\varphi_{\{x,z\}}$ and $\varphi_{\{y,z\}}$ for $z\neq x,y$.
In particular, we have that 
\begin{align*}
d_{w}(x,y)= & w(\varphi_{x})+w(\varphi_{y})+\sum_{z\neq x,y}w(\varphi_{\{x,z\}})+\sum_{z\neq x,y}w(\varphi_{\{y,z\}})\\
= & w(\varphi_{x})+w(\varphi_{y})+\sum_{z\neq x}w(\varphi_{\{x,z\}})+\sum_{z\neq y}w(\varphi_{\{y,z\}})-2w(\varphi_{\{x,y\}})=2a-4\cdot\frac{1+b_{k}-b_{i}}{4}=2a+b_{i}-1-b_{k}
\end{align*}
where $i$ is such that $\{x,y\}\in S_{i}$. In particular, we get
that $d_{w}(x,y)-d_{w}(a,b)=b_{i}-b_{j}=d(x,y)-d(a,b)$ whenever $(x,y)\in S_{i}$
and $(a,b)\in S_{j}$.

\end{proof}

\section{Topological Properties\label{sec:Topo}}

Given a set of pointed Kripke models $X$ and a descriptor $D\subseteq\boldsymbol{\mathcal{L}}_{\Lambda}$
for $\Lambda$ a modal logic sound w.r.t. $X$, Proposition \ref{prop:Metric}
states that for any weight function $w$, $d_{w}$ is metric on the
modal space $\boldsymbol{X}_{D}$, the quotient of $X$ under $D$-equivalence.
Hence $(\boldsymbol{X}_{D},d_{w})$ is a \textbf{metric space}. Any
such metric space induces a \textbf{topological space} $(\boldsymbol{X}_{D},\mathcal{T}_{w})$
with a basis consisting of the open $\epsilon$-balls of $(\boldsymbol{X}_{D},d_{w})$:
I.e., the basis of the $d_{w}$\textbf{ metric topology} $\mathcal{T}_{w}$
on $\boldsymbol{X}_{D}$ is $\{B_{d_{w}}(\boldsymbol{x},\varepsilon):\boldsymbol{x}\in\boldsymbol{X}_{D}\}$
with $B_{d_{w}}(\boldsymbol{x},\varepsilon)=\{\boldsymbol{y}\in\boldsymbol{X}_{D}:d_{w}(\boldsymbol{x},\boldsymbol{y})<\varepsilon\}$.
In this section, we investigate the topological properties of such
spaces.

\subsection{Stone-like Topologies}

In fixing a descriptor $D$ for $X$, one also fixes the family of
metrics $\mathcal{D}_{(X,D)}$. The members of $\mathcal{D}_{(X,D)}$
vary in their metrical properties, as evident from e.g. comparing
Examples \ref{ex:n-bisim.metric} and \ref{ex:org.metric}. They are
however topologically equivalent. To show this, we must work with
the following generalization of the \emph{Stone topology}:
\begin{defn}
\label{def:Stone-like.Topology} Let $D$ be a descriptor for $X$.
Define the \textbf{Stone-like topology} on $\boldsymbol{X}_{D}$ to
be the topology $\mathcal{T}_{D}$ given by the subbasis of sets $\{\boldsymbol{x}\in\boldsymbol{X}_{D}\colon x\models\varphi\}$
and $\{\boldsymbol{x}\in\boldsymbol{X}_{D}\colon x\models\neg\varphi\}$
for $\boldsymbol{\varphi}\in D$.
\end{defn}

Note that, as $D$ need not be closed under conjunction, this subbasis
is, in general, not a \textbf{basis} of the topology. When $D\subseteq\boldsymbol{\mathcal{L}}_{\Lambda}$
is $\Lambda$-representative, $\boldsymbol{X}_{D}$ is identical to
$\ms$, and the Stone-like topology $\mathcal{T}_{D}$ on $\boldsymbol{X}_{D}$
is identical to the \textbf{Stone topology }on $\ms$ given by the
basis of sets $\{\boldsymbol{x}\in\ms\colon x\models\varphi\}$, $\boldsymbol{\varphi}\in\boldsymbol{\mathcal{L}}_{\Lambda}$.

We may now state the promised proposition:
\begin{prop}
\label{prop:same-topology} The metric topology $\mathcal{T}_{w}$
of any metric $d_{w}\in\mathcal{D}_{(X,D)}$ on $\boldsymbol{X}_{D}$
is the Stone-like topology $\mathcal{T}_{D}$.
\end{prop}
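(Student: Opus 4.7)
The plan is to prove the two topologies equal by showing mutual containment of subbasis/basis elements, which reduces to two standard open-neighborhood arguments. Fix an enumeration $\varphi_1,\varphi_2,\dots$ of $D$ and a weight function $w$.

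First I would verify $\mathcal{T}_D \subseteq \mathcal{T}_w$. It suffices to check that each subbasis element of $\mathcal{T}_D$ is open in the metric topology. Take $U=\{\boldsymbol{x}\in\boldsymbol{X}_D:x\models\varphi_k\}$ for some $\boldsymbol{\varphi}_k\in D$, and let $\boldsymbol{x}\in U$. I claim $B_{d_w}(\boldsymbol{x},w(\varphi_k))\subseteq U$: for any $\boldsymbol{y}$ in this ball, $d_w(\boldsymbol{x},\boldsymbol{y})<w(\varphi_k)$ forces $d_k(\boldsymbol{x},\boldsymbol{y})=0$ (otherwise the $k$-th term alone would already match or exceed $w(\varphi_k)$), so $y\models\varphi_k$. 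The symmetric subbasis element $\{\boldsymbol{x}:x\models\neg\varphi\}$ is handled identically.

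For the converse $\mathcal{T}_w\subseteq\mathcal{T}_D$, it suffices to show every open ball is a union of $\mathcal{T}_D$-open sets. Let $B_{d_w}(\boldsymbol{x},\varepsilon)$ be given and fix $\boldsymbol{y}\in B_{d_w}(\boldsymbol{x},\varepsilon)$. Set $\delta=\varepsilon-d_w(\boldsymbol{x},\boldsymbol{y})>0$. Because $\sum_{k=1}^\infty w(\varphi_k)$ converges, there exists $N$ with $\sum_{k>N}w(\varphi_k)<\delta$. Define
\[
V = \bigcap_{k=1}^{N} \{\boldsymbol{z}\in\boldsymbol{X}_D : z\models\varphi_k \text{ iff } y\models\varphi_k\},
\]
a finite intersection of $\mathcal{T}_D$-subbasis elements, hence $\mathcal{T}_D$-open, and containing $\boldsymbol{y}$. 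For any $\boldsymbol{z}\in V$, the first $N$ terms of $d_w(\boldsymbol{y},\boldsymbol{z})$ vanish, so $d_w(\boldsymbol{y},\boldsymbol{z})\leq\sum_{k>N}w(\varphi_k)<\delta$. By the triangle inequality, $d_w(\boldsymbol{x},\boldsymbol{z})<d_w(\boldsymbol{x},\boldsymbol{y})+\delta=\varepsilon$, so $V\subseteq B_{d_w}(\boldsymbol{x},\varepsilon)$. Thus each point of the ball has a $\mathcal{T}_D$-open neighborhood inside it, which makes the ball $\mathcal{T}_D$-open.

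The only mildly delicate step is the second direction, where one must use the tail-summability of $(w(\varphi_k))$ to bound the contribution of all coordinates beyond some finite $N$; once that bound is in place, the triangle inequality closes the argument. No hypothesis of representativeness on $D$ is needed, so the proposition holds for arbitrary descriptors.
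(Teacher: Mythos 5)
Your proof is correct and follows essentially the same route as the paper's: one direction uses the ball of radius $w(\varphi_k)$ around a point of a subbasis element, and the other uses tail-summability of the weights to cut off at a finite $N$, a finite intersection of subbasis elements fixing agreement on $\varphi_1,\dots,\varphi_N$, and the triangle inequality. The only cosmetic difference is that the paper packages the finite agreement condition as a single conjunction $\chi=\bigwedge_{i<n}\chi_i$ while you write it directly as an intersection of subbasis elements, which amounts to the same thing.
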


\begin{proof}
We recall that for topologies $\mathcal{T}$ and \textbf{$\mathcal{T}'$}
on some set $X$, if $\mathcal{T}'\text{\ensuremath{\subseteq}}\mathcal{T}$,
then $\mathcal{T}'$ is said to be \textbf{finer than} $\mathcal{T}$,
and that this is the case iff for each $x\in X$ and each basis element
$B\in\mathcal{\mathcal{T}}$ with $x\in B$, there exists a basis
element $B'\in\mathcal{\mathcal{T}}'$ with $x\in B'\subseteq B$,
cf. \cite[Lem. 13.3]{Munkres}. \medskip{}

\noindent 1) The topology $\mathcal{T}_{w}$ is finer than $\mathcal{T}_{D}$
($\mathcal{T}_{w}\subseteq\mathcal{T}_{D}$): It suffices to show
the claim for all elements of a subbasis of $\mathcal{T}_{D}$. Let
$\boldsymbol{x}\in\boldsymbol{X}_{D}$ and let $B_{D}$ be a subbasis
element of $\mathcal{T}_{D}$ which contains $\boldsymbol{x}$. Then
$B_{D}$ is of the form $\{\boldsymbol{y}\in\boldsymbol{X}_{D}:y\models\varphi\}$
or $\{\boldsymbol{y}\in\boldsymbol{X}_{D}:y\models\text{\ensuremath{\neg}}\varphi\}$
for some $\varphi\in\boldsymbol{\varphi}\in D$. Wlog we assume the
former. As $\boldsymbol{x}\in B_{D}$, $x\models\varphi$. In the
metric $d_{w}$, $\boldsymbol{\varphi}$ is assigned a strictly positive
weight $w(\boldsymbol{\varphi})$. The open ball $B(\boldsymbol{x},w(\boldsymbol{\varphi}))$
of radius $w(\boldsymbol{\varphi})$ around $\boldsymbol{x}$ is a
basis element of $\mathcal{T}_{w}$ and contains $\boldsymbol{x}$.
Moreover, $B(\boldsymbol{x},w(\boldsymbol{\varphi}))\subseteq B_{D}$:
Assume $\boldsymbol{y}\in B(\boldsymbol{x},w(\boldsymbol{\varphi}))$,
but $y\not\models\varphi$. Then $d_{w}(\boldsymbol{x},\boldsymbol{y})\geq w(\boldsymbol{\varphi})$.
But then $\boldsymbol{y}\not\in B(x,w(\boldsymbol{\varphi}))$, contrary
to assumption. We conclude that $\mathcal{T}_{w}$ is finer than $\mathcal{T}_{D}$.\medskip{}

\noindent 2) The topology $\mathcal{T}_{D}$ is finer than $\mathcal{T}_{w}$
($\mathcal{T}_{D}\subseteq\mathcal{T}_{w}$): Let $B$ be a basis
element of $\mathcal{T}_{w}$ which contains $\boldsymbol{x}$. As
$B$ is a basis element, it is of the form $B(\boldsymbol{y},\delta)$
for some $\delta>0$. Let $\epsilon=\delta-d_{w}(\boldsymbol{x},\boldsymbol{y})$.
Note that $\epsilon>0$. Let $\boldsymbol{\varphi}_{1},\boldsymbol{\varphi}_{2},...$
be an enumeration of $D$. Since $\sum_{i=0}^{\infty}w(\boldsymbol{\varphi}_{i})<\infty$
, there is some $n$ such that $\sum_{i=n}^{\infty}w(\boldsymbol{\varphi}_{i})<\epsilon$.
For $i<n$, pick some $\chi_{i}\in\boldsymbol{\varphi}_{i}$ if $x\vDash\varphi_{i}$
and some as $\chi_{i}$ with $\neg\chi_{i}\in\boldsymbol{\varphi}_{i}$
otherwise. Let $\chi=\bigwedge_{i<n}\chi_{i}$. By construction, all
$z$ with $z\vDash\chi$ agree with $x$ on the truth values of $\varphi_{1},\ldots,\varphi_{n-1}$
and thus $d_{w}(\boldsymbol{x},\boldsymbol{z})<\epsilon$. By the
triangular inequality, this implies $d_{w}(\boldsymbol{y,z})<\delta$
and hence $\{\boldsymbol{z}\colon z\vDash\varphi\}\subseteq B$. Furthermore,
since $\mathcal{T}_{D}$ is generated by $\{\boldsymbol{x}\in\boldsymbol{X}_{D}\colon x\models\varphi\}$
and $\{\boldsymbol{x}\in\boldsymbol{X}_{D}\colon x\not\models\varphi\}$
for $\varphi\in\boldsymbol{\varphi}\text{\ensuremath{\in}}D$, we
have $\{\boldsymbol{z}\colon z\vDash\varphi\}\in\mathcal{T}_{D}$
as desired.
\end{proof}

\subsection{Stone Spaces}

 The Stone topology is well-known, but typically defined on the set
of ultrafilters of a Boolean algebra, which it turns into a \textbf{Stone
space}: A \textbf{totally disconnected}, \textbf{compact}, \textbf{Hausdorff}
topological space.

When applying Stone-like topologies to modal spaces, Stone spaces
often result. That the resulting topological spaces are Hausdorff
follows as each Stone-like topology is metrizable, cf. the previous
section. We show that the Stone-like topology is also totally disconnected
and identify sufficient conditions for its compactness.
\begin{prop}
\label{prop:discon} For any $X$-descriptor $D$, the space $(\boldsymbol{X}_{D},\mathcal{T}_{D})$
is totally disconnected.

\begin{proof}
Let $\boldsymbol{x}\neq\boldsymbol{y}\in\boldsymbol{X}_{D}$. We must
find open sets $U,V$ with $\boldsymbol{x}\in U$ and $\boldsymbol{y}\in Y$
such that $U\cup V=\boldsymbol{X}_{D}$ and $U\cap V=\emptyset$.
Since $\boldsymbol{x}\neq\boldsymbol{y}$, there exists some $\boldsymbol{\varphi}\in D$
such that $x\models\varphi$ while $y\not\models\varphi$. The sets
$A=\{\boldsymbol{z}\in\boldsymbol{X}_{D}\colon z\models\varphi\}$
and $\overline{A}=\{\boldsymbol{z}'\in\boldsymbol{X}_{D}\colon z\models\neg\varphi\}$
are both open in the Stone-like topology, $A\cup\overline{A}=\boldsymbol{X}_{D}$
and $A\cap\overline{A}=\emptyset$. As $\boldsymbol{x}\in A$ and
$\boldsymbol{y}\in\overline{A}$, this shows that the space  $(\boldsymbol{X}_{D},\mathcal{T}_{D})$
is totally disconnected.
\end{proof}
\end{prop}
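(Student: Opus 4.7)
The plan is to establish total disconnectedness directly by showing that any two distinct points of $\boldsymbol{X}_D$ can be separated by a pair of disjoint, complementary open sets. Since connected components are contained in every partition of the space into two disjoint open sets that covers it, such a separation witnesses that no two-point set is connected, hence that components are singletons. The crucial observation that makes the proof immediate is structural: the defining subbasis of $\mathcal{T}_D$ in Definition \ref{def:Stone-like.Topology} consists of pairs $\{\boldsymbol{z} \colon z \models \varphi\}$ and $\{\boldsymbol{z} \colon z \models \neg\varphi\}$, and these two sets are complements of one another in $\boldsymbol{X}_D$. Hence every element of the subbasis is \emph{clopen}, not merely open.

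Concretely, I would proceed as follows. First, fix $\boldsymbol{x} \neq \boldsymbol{y}$ in $\boldsymbol{X}_D$. By the definition of the quotient $\boldsymbol{X}_D$ (which identifies structures agreeing on every $\boldsymbol{\varphi} \in D$), there must exist some $\boldsymbol{\varphi} \in D$ on which representative elements of $\boldsymbol{x}$ and $\boldsymbol{y}$ disagree; without loss of generality, $x \models \varphi$ and $y \not\models \varphi$. Second, set $A = \{\boldsymbol{z} \in \boldsymbol{X}_D \colon z \models \varphi\}$ and $\overline{A} = \{\boldsymbol{z} \in \boldsymbol{X}_D \colon z \models \neg\varphi\}$. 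Both sets are subbasis elements of $\mathcal{T}_D$ and thus open. Third, observe that $A \cup \overline{A} = \boldsymbol{X}_D$ and $A \cap \overline{A} = \emptyset$, using the fact that the soundness requirement on $\Lambda$ (Remark \ref{rem:Soundness}) ensures that truth of $\varphi$ is well-defined on each equivalence class. Finally, since $\boldsymbol{x} \in A$ and $\boldsymbol{y} \in \overline{A}$, the pair $(A, \overline{A})$ is a clopen partition separating the two points, which is the desired conclusion.

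There is essentially no obstacle here: the proof is a one-step consequence of the way the subbasis is set up, namely that it is closed under complementation by construction. The only mild subtlety is to confirm that the quotienting by $D$-equivalence is done at precisely the right level so that ``$\boldsymbol{x} \neq \boldsymbol{y}$'' really does furnish a separating $\boldsymbol{\varphi} \in D$, but this is immediate from the definition of $\boldsymbol{x}_D$. Note that this argument does not require $D$ to be $\Lambda$-representative, nor does it rely on compactness or on metric considerations; it holds for \emph{any} descriptor, which is why the statement is unconditional.
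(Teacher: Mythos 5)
Your proof is correct and follows essentially the same route as the paper's: pick a $\boldsymbol{\varphi}\in D$ on which the two classes disagree (guaranteed by the definition of the quotient) and separate the points by the complementary subbasic open sets $\{\boldsymbol{z}\colon z\models\varphi\}$ and $\{\boldsymbol{z}\colon z\models\neg\varphi\}$. The only difference is presentational: you make explicit the step from ``every pair of distinct points is separated by a clopen partition'' to ``components are singletons,'' which the paper leaves implicit.
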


The space $(\boldsymbol{X}_{D},\mathcal{T}_{D})$, $D\subseteq\boldsymbol{\mathcal{L}}_{\Lambda}$,
is moreover compact when two requirements are satisfied: First, the
logic $\Lambda$ underlying $D$ must be \textbf{logically compact:
}An arbitrary set $A\subseteq\mathcal{L}$ of formulas is $\Lambda$-consistent
iff every finite subset of $A$ is also $\Lambda$-consistent. Many
modal logics are compact, including every \textbf{basic modal logic},
cf. e.g. \cite{BenthemBlackburn}, but not all are: Examples include
logics with a \emph{common knowledge operator }\cite[7.3]{Ditmarsch2008}
or with \emph{Kleene star}\textbf{ }as a \emph{PDL constructor }\cite[4.8]{BlueModalLogic2001}.
As the second requirement, we must assume the set $X$ sufficiently
rich in model diversity:
\begin{defn*}
Let $D\subseteq\boldsymbol{\mathcal{L}}_{\Lambda}$ be an $X$-descriptor.
Say that $X$ is \textbf{saturated }with respect to $D$ if for all
subsets $Y,Y'\subseteq D$ such that $B=\{\varphi\in\mathcal{L}:\boldsymbol{\varphi}\in Y\}\cup\{\text{\ensuremath{\neg}}\varphi\in\mathcal{L}:\boldsymbol{\varphi}\in Y'\}$
is $\Lambda$-consistent, there exists a model $x$ in $X$ such that
$x\models\psi$ for all $\psi\in B$.
\end{defn*}
Under these two requirements, we obtain the following:
\begin{prop}
\label{prop:compact} If $\Lambda$ is a compact and $X$ is saturated
with respect to $D\subseteq\boldsymbol{\mathcal{L}}_{\Lambda}$, then
the space $(\boldsymbol{X}_{D},\mathcal{T}_{D})$ is compact.
\end{prop}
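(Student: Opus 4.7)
The plan is to invoke the Alexander Subbase Theorem: a space is compact iff every cover by \emph{subbasic} open sets admits a finite subcover. Since $\mathcal{T}_D$ is defined via an explicit subbasis in Definition \ref{def:Stone-like.Topology}, this is the natural tool.

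First, I would suppose for contradiction that there is a cover $\mathcal{U}$ of $\boldsymbol{X}_D$ by sets of the subbasic form $\{\boldsymbol{x}\in\boldsymbol{X}_D : x\models\varphi\}$ or $\{\boldsymbol{x}\in\boldsymbol{X}_D : x\models\neg\varphi\}$ for $\boldsymbol{\varphi}\in D$, with no finite subcover. Each $U\in\mathcal{U}$ has a complement of the same subbasic shape, so I can encode it by a literal $\psi_U\in\mathcal{L}$ (either $\varphi$ or $\neg\varphi$) such that $U^c=\{\boldsymbol{x}\in\boldsymbol{X}_D : x\models\psi_U\}$. Set $B=\{\psi_U : U\in\mathcal{U}\}$; note $B$ has the form $\{\varphi : \boldsymbol{\varphi}\in Y\}\cup\{\neg\varphi : \boldsymbol{\varphi}\in Y'\}$ required by saturation.

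Next, I would translate ``no finite subcover'' into the finite intersection property for the complements: for every finite $\mathcal{U}_0\subseteq\mathcal{U}$ there is some $x\in X$ with $\boldsymbol{x}\in\bigcap_{U\in\mathcal{U}_0}U^c$, i.e. $x\models\psi_U$ for every $U\in\mathcal{U}_0$. Since $\Lambda$ is sound with respect to $X$ (the standing assumption under which $\mathcal{D}_{(X,D)}$ is defined, cf.\ Remark \ref{rem:Soundness}), every finite subset of $B$ is $\Lambda$-consistent. By logical compactness of $\Lambda$, the full set $B$ is $\Lambda$-consistent.

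Finally, I would apply the saturation hypothesis on $X$ to the consistent set $B$: this yields some $x\in X$ with $x\models\psi$ for every $\psi\in B$, i.e.\ $\boldsymbol{x}\in\bigcap_{U\in\mathcal{U}}U^c$. Hence $\boldsymbol{x}$ lies in no member of $\mathcal{U}$, contradicting the assumption that $\mathcal{U}$ covers $\boldsymbol{X}_D$. Thus every subbasic cover has a finite subcover, and Alexander's theorem gives compactness. The only delicate point is the bookkeeping between subbasic opens, their complements, and the literals assembling $B$ in the exact form that saturation requires; once that matching is made, compactness of $\Lambda$ and saturation slot in mechanically.
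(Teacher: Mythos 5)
Your proof is correct, and while its logical core (finite intersection property $\to$ finite $\Lambda$-consistency via soundness $\to$ full consistency via logical compactness $\to$ a realizing model via saturation $\to$ contradiction with covering) coincides with the paper's, the topological reduction is genuinely different. The paper reduces to covers by \emph{basic} open sets $\{\boldsymbol{x}: x\models\chi_i\}$ with $\chi_i$ a finite conjunction of literals over $D$ --- an elementary, choice-free reduction --- and then applies saturation to the set $\{\neg\chi_i : i\in I\}$. You instead invoke the Alexander Subbase Theorem and work with \emph{subbasic} covers, so that the complements are single literals $\psi_U$ and the set $B=\{\psi_U\}$ has \emph{exactly} the shape $\{\varphi:\boldsymbol{\varphi}\in Y\}\cup\{\neg\varphi:\boldsymbol{\varphi}\in Y'\}$ demanded by the definition of saturation. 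This is a real advantage: in the paper's version the formulas $\neg\chi_i$ are disjunctions of literals, not literals, so saturation as defined does not apply verbatim and an extra step (e.g.\ extending to a maximal consistent set and extracting its $D$-literals) is silently needed; your bookkeeping closes that gap. The price is that Alexander's theorem is itself a nontrivial, choice-dependent tool, whereas the paper's reduction to basic covers is elementary. Both routes are legitimate; yours trades topological elementarity for an exact fit with the saturation hypothesis.
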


\begin{proof}
\noindent Note that a basis of the topology $\mathcal{T}_{D}$ is given by the
family of all sets $\{\boldsymbol{x}\in\boldsymbol{X}_{D}\colon x\models\chi\}$,
where $\chi$ is of the form $\chi=\psi_{1}\wedge\ldots\wedge\psi_{n}$
for some $n$ such that for all $i\leq n$ either $\psi_{i}\in\boldsymbol{\varphi}_{j}\in D$
or $\neg\psi_{i}\in\boldsymbol{\varphi}_{j}\in D$ . To show that
$(\boldsymbol{X}_{D},\mathcal{T}_{D})$ is compact, it suffices to
show that every open cover consisting of basic open sets has a finite
subcover. Suppose that $\{\{\boldsymbol{x}\in\boldsymbol{X}_{D}\colon x\models\chi_{i}\}\colon i\in I\}$
is a cover of $\boldsymbol{X}$ but that contains no finite subcover.
This implies that every finite subset $\{\neg\chi_{i}\colon i\in I\}$
is consistent, i.e., the set $\{\neg\chi_{i}\chi_{i}\colon i\in I\}$
is finitely $\Lambda$-consistent. By the compactness of $\Lambda$,
$\{\neg\chi_{i}\chi_{i}\colon i\in I\}$ itself is thus $\Lambda$-consistent.
By saturation, there is an $x\in X$ such that $x\models\neg\chi_{i}$
for all $i\in I$. But then $\boldsymbol{x}$ cannot be in $\{\boldsymbol{x}\in\boldsymbol{X}_{D}\colon x\models\chi_{i}\}$
for any $i\in I$. This contradicts that $\{\{\boldsymbol{x}\in\boldsymbol{X}_{D}\colon x\models\chi_{i}\}\colon i\in I\}$
is a cover of $\boldsymbol{X}$.
\end{proof}
Propositions \ref{prop:discon} and \ref{prop:compact} jointly yields
the following:
\begin{cor}
Let $\Lambda$ be a compact modal logic sound and complete with respect
to the class of pointed Kripke models $\mathcal{C}$. Then $(\mathcal{C}_{\boldsymbol{\mathcal{L}}_{\Lambda}},\mathcal{T}_{\boldsymbol{\mathcal{L}}_{\Lambda}})$
is a Stone space.

\noindent \begin{proof}
\textit{\emph{The statement follows immediately the propositions of
this section when }}$\mathcal{C}_{\boldsymbol{\mathcal{L}}_{\Lambda}}$\textit{\emph{
is ensured to be a set using }}\textbf{\textit{\emph{Scott's trick}}}
\cite{ScottsTrick}.
\end{proof}
\end{cor}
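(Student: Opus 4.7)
The plan is to verify the three defining properties of a Stone space --- Hausdorff, totally disconnected, and compact --- by invoking the propositions already established in this section with $D = \boldsymbol{\mathcal{L}}_{\Lambda}$. A preliminary issue is set-theoretic: $\mathcal{C}$ is a proper class, so the quotient needs to be made a set. I would handle this first by applying Scott's trick: within each $\Lambda$-equivalence class of pointed Kripke models, restrict to representatives of minimal set-theoretic rank. The resulting collection of representatives forms a set, and the induced quotient $\mathcal{C}_{\boldsymbol{\mathcal{L}}_{\Lambda}}$ is then legitimately a set on which to define a topology.

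Next, I would establish Hausdorffness by appealing to metrizability. Since $\mathcal{L}$ is countable, so is $\boldsymbol{\mathcal{L}}_{\Lambda}$, so weight functions $w$ exist and Proposition~\ref{prop:Metric} yields a metric $d_w \in \mathcal{D}_{(\mathcal{C}, \boldsymbol{\mathcal{L}}_{\Lambda})}$ on $\mathcal{C}_{\boldsymbol{\mathcal{L}}_{\Lambda}}$. By Proposition~\ref{prop:same-topology}, the induced metric topology $\mathcal{T}_w$ coincides with the Stone-like topology $\mathcal{T}_{\boldsymbol{\mathcal{L}}_{\Lambda}}$, and every metric space is Hausdorff. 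Total disconnectedness then follows directly from Proposition~\ref{prop:discon} applied with $D = \boldsymbol{\mathcal{L}}_{\Lambda}$, with no further work required.

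The remaining step --- compactness --- is where the hypotheses of the corollary actually do work, and this is the main obstacle. By Proposition~\ref{prop:compact}, it suffices to check that $\mathcal{C}$ is saturated with respect to $\boldsymbol{\mathcal{L}}_{\Lambda}$, since $\Lambda$ is compact by assumption. Unfolding the saturation condition: given $Y, Y' \subseteq \boldsymbol{\mathcal{L}}_{\Lambda}$ such that $B = \{\varphi : \boldsymbol{\varphi} \in Y\} \cup \{\neg\varphi : \boldsymbol{\varphi} \in Y'\}$ is $\Lambda$-consistent, I need a model $x \in \mathcal{C}$ with $x \models \psi$ for all $\psi \in B$. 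This is exactly what completeness of $\Lambda$ with respect to $\mathcal{C}$ delivers: a $\Lambda$-consistent set of formulas is satisfiable in some pointed model in $\mathcal{C}$. Hence saturation holds, Proposition~\ref{prop:compact} gives compactness, and combined with the previous two steps we conclude that $(\mathcal{C}_{\boldsymbol{\mathcal{L}}_{\Lambda}}, \mathcal{T}_{\boldsymbol{\mathcal{L}}_{\Lambda}})$ is a Stone space.
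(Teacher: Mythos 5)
Your proof is correct and is essentially the paper's own argument spelled out: the paper simply notes that the corollary follows from the propositions of this section together with Scott's trick, and you have correctly identified the three ingredients (metrizability for Hausdorff, Proposition \ref{prop:discon} for total disconnectedness, Proposition \ref{prop:compact} plus completeness-implies-saturation for compactness). The only point worth flagging is that the saturation step requires reading ``complete'' as \emph{strongly} complete, which is evidently the intended reading.
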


\subsubsection{Compact Subspaces}

As the intersection of an arbitrary family of closed sets is itself
a closed set in any topology and as every closed subspace of a compact
space is compact (\cite[Thms 17.1, 26.2]{Munkres}), we obtain the
following, making use of the fact that $\{y\in X:y\models\varphi\}=X-\{y\in X:y\vDash\neg\varphi\}$
is closed for any $\varphi\in D$.
\begin{cor}
Let $A\subseteq D$ and let $Y=X\cap\{y\in X:y\models\varphi\text{ for all }\boldsymbol{\varphi}\in A\}$.
If $(\boldsymbol{X}_{D},\mathcal{T}_{D})$ is compact, then $\boldsymbol{Y}_{D}$
is compact under the subspace topology. 
\end{cor}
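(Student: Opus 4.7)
The plan is to apply the standard topology facts invoked in the framing sentence: a closed subspace of a compact space is compact, and arbitrary intersections of closed sets are closed. All the work therefore lies in exhibiting $\boldsymbol{Y}_D$ as a closed subspace of $(\boldsymbol{X}_D,\mathcal{T}_D)$.

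First I would observe that, because $A\subseteq D$ and the relation defining the quotient is agreement on all $\boldsymbol{\varphi}\in D$, membership in $Y$ depends only on the equivalence class: if $y\in Y$ and $y'\in\boldsymbol{y}_D$, then $y'\models\varphi$ for every $\boldsymbol{\varphi}\in A$, so $y'\in Y$. Hence $\boldsymbol{Y}_D$ can be identified with the subset $\{\boldsymbol{y}\in\boldsymbol{X}_D\colon y\models\varphi\text{ for all }\boldsymbol{\varphi}\in A\}$ of $\boldsymbol{X}_D$.

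Next, for each individual $\boldsymbol{\varphi}\in A$, the set $C_\varphi:=\{\boldsymbol{y}\in\boldsymbol{X}_D\colon y\models\varphi\}$ is the complement in $\boldsymbol{X}_D$ of the subbasic open set $\{\boldsymbol{y}\in\boldsymbol{X}_D\colon y\models\neg\varphi\}$ from Definition \ref{def:Stone-like.Topology}, and so is closed in $\mathcal{T}_D$. Therefore
\[
\boldsymbol{Y}_D \;=\; \bigcap_{\boldsymbol{\varphi}\in A} C_\varphi
\]
is an intersection of closed sets, and thus closed in $(\boldsymbol{X}_D,\mathcal{T}_D)$.

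Finally, since $(\boldsymbol{X}_D,\mathcal{T}_D)$ is assumed compact and closed subspaces of compact spaces are compact (cf.\ \cite[Thm.~26.2]{Munkres}), $\boldsymbol{Y}_D$ is compact under the subspace topology, as desired. I do not expect a substantive obstacle here: the only thing to double-check is the well-definedness step in the first paragraph, ensuring that the ``defining conditions'' on $Y$ descend cleanly to the quotient so that writing $\boldsymbol{Y}_D$ as the intersection of the $C_\varphi$ is legitimate.
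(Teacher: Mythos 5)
Your proposal is correct and follows essentially the same route as the paper, which proves the corollary in the framing sentence preceding it: each $\{\boldsymbol{y}\colon y\models\varphi\}$ is closed as the complement of the subbasic open set $\{\boldsymbol{y}\colon y\models\neg\varphi\}$, so $\boldsymbol{Y}_{D}$ is an intersection of closed sets and hence a closed, therefore compact, subspace. Your extra check that membership in $Y$ descends to the quotient (because $A\subseteq D$) is a worthwhile point of care that the paper leaves implicit.
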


\noindent Moreover, the subspace topology when removing such $D$-definable
sets of models is again the Stone topology.

\subsection{Open, Closed and Clopen Sets in Stone-like Topologies}

In this section, we characterize the open, closed and clopen sets
of Stone-like topologies relative to the set of $\Lambda$-propositions.
With this, we hope to paint a logical picture of the structure of
Stone-like topologies, helpful in understanding closed subspaces and
limit points.

Given the modal space $\boldsymbol{X}_{D}$, $D\subseteq\boldsymbol{\mathcal{L}}_{\Lambda}$,
let $[\boldsymbol{\varphi}]_{D}=\{\boldsymbol{x}\in\boldsymbol{X}_{D}:\forall x\in\boldsymbol{x},x\models\varphi\}$
for each $\boldsymbol{\varphi}\in\boldsymbol{\mathcal{L}}_{\Lambda}$.
While this is well-defined for all $\boldsymbol{\varphi}\in\boldsymbol{\mathcal{L}}_{\Lambda}$,
there might be degenerate cased where $[\boldsymbol{\varphi}]_{D}\cup[\boldsymbol{\neg\varphi}]_{D}\not=\boldsymbol{X}_{D}$,
i.e. there may be some$\mathbf{x}_{D}\in\mathbf{X}_{D}$ such that
$\mathbf{x}\not\subseteq[\varphi]$, and$\mathbf{x}\not\subseteq[\neg\varphi].$If
$D$ is representative no such degerate cases occur, i.e. $[\boldsymbol{\varphi}]_{D}\cup[\boldsymbol{\neg\varphi}]_{D}=\boldsymbol{X}_{D}$
for all $\boldsymbol{\varphi}\in\boldsymbol{\mathcal{L}}_{\Lambda}$

By definition, the Stone-like topology $\mathcal{T}_{D}$ is generated
by the subbasis $\mathcal{S}_{D}=\{[\boldsymbol{\varphi}]_{D},[\boldsymbol{\neg\varphi}]_{D}\colon\boldsymbol{\varphi}\in D\}$.
All subbasis elements are clearly clopen: If $U$ is of the form $[\boldsymbol{\varphi}]_{D}$
for some $\boldsymbol{\varphi}\in D$, then the complement of $U$
is the set $[\boldsymbol{\neg\varphi}]_{D}$, which again is a subbasis
element. Hence both $[\boldsymbol{\varphi}]_{D}$ and $[\boldsymbol{\neg\varphi}]_{D}$
are clopen. As being clopen entails having empty \textbf{boundary},
the $\Lambda$-propositions $\boldsymbol{\varphi}$ and $\boldsymbol{\neg\varphi}$
are thus unambiguously reflected by the topology.
\begin{defn*}
Say that the Stone-like topology $\mathcal{T}_{D}$, $D\subseteq\ll$,
on the modal space $\boldsymbol{X}_{D}$ \textbf{reflects $\Lambda$}
if for every set $Y\subseteq\boldsymbol{X}_{D}$, $Y$ is clopen in
$\mathcal{T}_{D}$ iff $Y=[\boldsymbol{\varphi}]_{D}$ for some $\boldsymbol{\varphi}\in\boldsymbol{\mathcal{L}}_{\Lambda}$.
\end{defn*}
We immediately obtain the following: 
\begin{prop}
\noindent \label{prop:stoneclopen}For any modal space $\boldsymbol{X}_{D}$,
$D\subseteq\ll$, if $\Lambda$ is compact and $D$ is $\Lambda$-representative,
then $[\boldsymbol{\varphi}]_{D}$ is clopen in $\mathcal{T}_{D}$,
for every $\boldsymbol{\varphi}\in\ll$. If $\boldsymbol{X}_{D}$
is also saturated, then $\td$ reflects $\Lambda$.
\end{prop}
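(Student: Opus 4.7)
The plan is to prove the two claims in turn, both resting on the compactness of $\Lambda$ (for the first) and on the previously established topological compactness of $(\boldsymbol{X}_D, \td)$ from Proposition \ref{prop:compact} (for the second).

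\textbf{First claim.} Fix $\boldsymbol{\varphi} \in \ll$; I show $[\boldsymbol{\varphi}]_{D}$ is open, the argument for $[\boldsymbol{\neg\varphi}]_{D}$ being symmetric. By $\Lambda$-representativeness of $D$, there is a set $\{\boldsymbol{\psi}_{i}\}_{i \in I} \subseteq D$ such that any sign-choice $\Lambda$-entails $\varphi$ or $\neg\varphi$. Given $\boldsymbol{x} \in [\boldsymbol{\varphi}]_{D}$, let $\chi_{i} = \psi_{i}$ if $x \models \psi_{i}$ and $\chi_{i} = \neg\psi_{i}$ otherwise. Then $x$ satisfies every $\chi_{i}$, and by soundness $\{\chi_{i}\}_{i \in I}$ cannot $\Lambda$-entail $\neg\varphi$ (since $x \models \varphi$), so it must entail $\varphi$. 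Applying compactness of $\Lambda$ to $\{\chi_{i}\}_{i \in I} \cup \{\neg\varphi\}$, a finite subset $\{\chi_{i_{1}}, \ldots, \chi_{i_{n}}\}$ already entails $\varphi$. The set $U_{\boldsymbol{x}} = \{\boldsymbol{y} \in \md : y \models \chi_{i_{1}} \wedge \cdots \wedge \chi_{i_{n}}\}$ is a basic open of $\td$ that contains $\boldsymbol{x}$ and is contained in $[\boldsymbol{\varphi}]_{D}$. Hence $[\boldsymbol{\varphi}]_{D} = \bigcup_{\boldsymbol{x} \in [\boldsymbol{\varphi}]_{D}} U_{\boldsymbol{x}}$ is open; the same reasoning delivers openness of $[\boldsymbol{\neg\varphi}]_{D}$. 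Since $D$ is representative, the preceding paragraph of the paper records that $[\boldsymbol{\varphi}]_{D} \cup [\boldsymbol{\neg\varphi}]_{D} = \md$, and the two sets are disjoint; each is therefore the complement of an open set, hence clopen.

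\textbf{Second claim.} Suppose additionally $\md$ is saturated, so by Proposition \ref{prop:compact} the space $(\md, \td)$ is topologically compact. The ``if'' direction of reflection is the first claim. For ``only if'', let $Y$ be clopen. Being closed in a compact space, $Y$ is compact; being open, $Y$ is a union of basic opens of $\td$, each a finite intersection of subbasis elements $[\boldsymbol{\psi}]_{D}$ and $[\boldsymbol{\neg\psi}]_{D}$ with $\boldsymbol{\psi} \in D$. Compactness yields a finite subcover $Y = B_{1} \cup \cdots \cup B_{k}$ where each $B_{j} = \{\boldsymbol{x} \in \md : x \models \eta_{j}\}$ for a conjunction $\eta_{j}$ of (negated or unnegated) representatives of propositions in $D$. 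Setting $\varphi := \eta_{1} \vee \cdots \vee \eta_{k}$ and letting $\boldsymbol{\varphi}$ be its $\Lambda$-equivalence class gives $Y = [\boldsymbol{\varphi}]_{D}$ with $\boldsymbol{\varphi} \in \ll$, as required.

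\textbf{Main obstacle.} The only nontrivial step is the passage in the first claim from the (potentially infinite) representative set $\{\chi_{i}\}_{i \in I}$ to a \emph{finite} conjunction witnessing openness at $\boldsymbol{x}$; this is exactly where compactness of $\Lambda$ enters, via consistency of $\{\chi_{i}\} \cup \{\neg\varphi\}$. The remainder is routine bookkeeping: representativeness gives the partition $[\boldsymbol{\varphi}]_{D} \sqcup [\boldsymbol{\neg\varphi}]_{D} = \md$, and for the second claim topological compactness of $\td$ (already in hand from Proposition \ref{prop:compact}) does the parallel job of trimming an open cover to a finite one.
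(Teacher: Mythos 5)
Your proof is correct, and on both halves it takes a route that is genuinely tidier than the paper's. For the first claim, the paper does \emph{not} argue directly in $\boldsymbol{X}_{D}$: it first proves openness of $[\boldsymbol{\varphi}]_{D}$ in the special case where $X$ is the class of \emph{all} models of $\Lambda$ (which is automatically saturated, so that the full $D$-type $D_{x}\cup\{\neg\varphi\}$ is inconsistent and logical compactness extracts a finite entailing conjunction), and then transfers the result to an arbitrary $X$ by pulling back along a continuous map into that universal space. You instead invoke $\Lambda$-representativeness to produce the entailing family $\{\chi_{i}\}_{i\in I}$ directly, so that consistency of $\{\chi_{i}\}\cup\{\neg\varphi\}$ fails for reasons internal to $X$, and compactness of $\Lambda$ yields the finite basic open $U_{\boldsymbol{x}}$ without any saturation hypothesis or two-stage reduction; this makes it transparent that the first claim really only needs representativeness and logical compactness. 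For the second claim, the paper re-derives the finiteness from scratch: it writes $O$ and its complement as unions of basic opens, forms the decreasing sequence $\rho_{k}$ of "gap" formulas, and uses saturation plus logical compactness to force some $[\boldsymbol{\rho}_{i_{0}}]_{D}=\emptyset$. You instead observe that under the stated hypotheses Proposition \ref{prop:compact} already gives topological compactness of $(\boldsymbol{X}_{D},\mathcal{T}_{D})$, so a clopen $Y$, being a closed subset covered by basic opens contained in it, is a finite union $[\boldsymbol{\eta_{1}\vee\cdots\vee\eta_{k}}]_{D}$. Both arguments are sound; yours buys brevity and reuse of an established result, while the paper's version of the second claim is self-contained and exposes exactly where saturation and logical compactness each enter. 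One small point worth making explicit in your write-up: when you pass from $y\models\chi_{i_{1}}\wedge\cdots\wedge\chi_{i_{n}}$ to $\boldsymbol{y}\in[\boldsymbol{\varphi}]_{D}$, you should note that every member of the cell $\boldsymbol{y}_{D}$ agrees with $y$ on the $\chi_{i_{j}}$ (since $\boldsymbol{\psi}_{i_{j}}\in D$) and hence, by soundness of $\Lambda$ over $X$, on $\varphi$ as well; this is what matches the paper's definition of $[\boldsymbol{\varphi}]_{D}$ as requiring \emph{all} representatives to satisfy $\varphi$.
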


\begin{proof}
\noindent \noindent We start to show that under the assumptions, $[\boldsymbol{\varphi}]_{D}$
is clopen in $\mathcal{T}_{D}$, for every $\boldsymbol{\varphi}\in\ll$.
We first show the claim for the special case where $X$ is the set
of \textit{all} K-models that satisfy $\Lambda$. It suffices to show
that $\{x\in\boldsymbol{{X}}_{D}:x\models\varphi\}$ is open for $\varphi\in\mathcal{L}_{\Lambda}-D$.
Fix such $\varphi$. As $D$ is $\Lambda$-representative, $\boldsymbol{X}_{D}$
is identical to $\boldsymbol{X}_{\mathcal{L}_{\Lambda}}$, hence $[\varphi]:=\{\boldsymbol{x}\in X_{D}\ :\ x\models\varphi\}$
is well-defined. To see that it is open, assume $\boldsymbol{x}\in[\varphi]$.
We find an open set $U$ with $\boldsymbol{x}\in U\subseteq[\varphi]$:
Let $D_{x}=\{\boldsymbol{\psi}\in D\ \colon\ x\models\psi\}\cup\{\neg\psi\colon\boldsymbol{\psi}\in D\text{ and }x\models\neg\psi\}$.
The set $D_{x}\cup\{\varphi\}$ is $\Lambda$-consistent. Moreover,
as X is saturated with respect to $D$, the set $D_{x}\cup\{\neg\varphi\}$
is $\Lambda$-inconsistent. By compactness, a finite subset $F$ of
$D_{x}\cup\{\neg\varphi\}$ is inconsistent. As $D_{x}$ is consistent,
$F$ contains $\varphi$ and some formulas $\psi_{1},\ldots,\psi_{n}\in D_{x}$.
As $F$ is inconsistent, we get that $\psi_{1}\wedge\ldots\wedge\psi_{n}\rightarrow\varphi$
is a theorem of $\Lambda$. On a semantic level, this implies that
$\bigcap_{i\leq n}[\psi_{i}]\subseteq[\varphi]$. As each $[\psi_{i}]$
is open, $\bigcap_{i\leq n}[\psi_{i}]\subseteq[\varphi]$ is an open
neighborhood of \textbf{$\boldsymbol{x}$} contained in $[\varphi]$.
Next, we proof the general case. Let $X$ be any set of $\Lambda$-models
and let $Y$ be the set of \textit{all} K-models that satisfy $\Lambda$.
Then the function $f:\boldsymbol{{X}_{D}}\rightarrow\boldsymbol{{Y}}_{D}$
that sends $\boldsymbol{{x}}\in\boldsymbol{{X}_{D}}$ to the unique
$\boldsymbol{{x}}\in\boldsymbol{{Y}}_{D}$ with $x\vDash\varphi\Leftrightarrow y\vDash\varphi$
for all $\varphi\in\mathcal{L}$ is a continuous map from $(\boldsymbol{X}_{D},\mathcal{T}_{D})$
to $(\boldsymbol{Y}_{D},\mathcal{T}_{D})$. with $f^{-1}\left(\{y\in\boldsymbol{{Y}}_{D}:y\models\varphi\}\right)=\{x\in\boldsymbol{{X}}_{D}:x\models\varphi\}$.
By the first part, $\{y\in\boldsymbol{{Y}}_{D}:y\models\varphi\}$
is clopen. As the continuous pre-image of clopen sets is clopen, this
shows that $\{x\in\boldsymbol{{X}}_{D}:x\models\varphi\}$ is clopen.

Now we show that if $\boldsymbol{X}_{D}$ is also saturated, then
$\td$ reflects $\Lambda$. It suffices to show that if O\textbackslash{}subseteq
X\_D is clopen, then O is of the form$[\varphi]_{D}$ for some $\varphi\in\mathcal{L}$.
So assume $O$ is clopen. As $O$ and its complement $\overline{O}$
are open, there are formulas $\psi_{i},\chi_{i}$ for $i\in\mathbb{N}$
such that $O=\bigcup_{i<\mathbb{N}}[\psi_{i}]_{D}$ and $\overline{O}=\bigcup_{i<\mathbb{N}}[\chi_{i}]_{D}$.
The latter is equivalent to $O=\bigcap_{i<\mathbb{N}}[\neg\chi_{i}]_{D}$.
In particular, we have for all $k$ that $\bigcup_{i<k}[\psi_{i}]_{D}\subseteq O\subseteq\bigcap_{i<k}[\neg\chi_{i}]_{D}$.
We are interested in the sets $\bigcap_{i<k}[\neg\chi_{i}]_{D}-\bigcup_{i<k}[\psi_{i}]_{D}$
for $k\in\mathbb{N}$. To this end, let $\rho_{i}=\bigwedge_{i<k}\neg\chi_{i}\wedge\neg\left(\bigvee_{i<k}\psi_{i}\right)$,
hence $[\rho_{i}]_{D}=\bigcap_{i<k}[\neg\chi_{i}]_{D}-\bigcup_{i<k}[\psi_{i}]_{D}$.
Note that $\vdash\rho_{i+1}\rightarrow\rho_{i}$ and that 
\[
\bigcap_{k\in\mathbb{N}}[\rho_{k}]_{D}=\bigcap_{k\in\mathbb{N}}\left(\bigcap_{i<k}[\neg\chi_{i}]_{D}-\bigcup_{i<k}[\psi_{i}]_{D}\right)=\bigcap_{i\in\mathbb{N}}[\neg\chi_{i}]_{D}-\bigcup_{i\in\mathbb{N}}[\psi_{i}]_{D}=X-X=\emptyset
\]
As $X$ is saturated with respect to $D$, this implies that the set
$\{\rho_{i}\colon i\in\mathbb{N}\}$ is inconsistent. By compactness
of $\Lambda$, there is a finite subset $S\subseteq\{\rho_{i}\colon i\in\mathbb{N}\}$
that is already inconsistent. Let $i_{0}$ be the largest index occurring
in this subset. As $\rho_{i_{0}}\rightarrow\rho_{j}$ for every $j<i_{0}$we
have that $\{\rho_{i_{0}}\}$ is also inconsistent; hence $\emptyset=[\rho_{i_{0}}]_{D}$.
By saturation this implies that $\bigcup_{i\leq i_{0}}[\psi_{i}]_{D}=O=\bigcap_{i\leq i_{0}}[\neg\chi_{i}]_{D}$.
In particular, $O=[\bigvee_{i\leq i_{0}}\psi_{i}]_{D}$ which is,
what we had to show.  
\end{proof}
Compactness is essential to the characterization of clopen sets in
terms of $\Lambda$-proposition extensions of Proposition \ref{prop:stoneclopen}.
Without the assumption of compactness, the clopen sets of Stone topologies
do not reflect the underlying logic:
\begin{prop}
\label{prop:stonenotclopen}Let $\boldsymbol{X}_{D}$ be saturated
and $D\subseteq\ll$ $\Lambda$-representative, but $\Lambda$ not
compact. Then there exists a set $U$ clopen in $\mathcal{T}_{D}$
not of the form $[\boldsymbol{\varphi}]_{D}$, for any $\boldsymbol{\varphi}\in\ll$.
\end{prop}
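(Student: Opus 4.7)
The plan is a cardinality argument: I will construct continuum-many pairwise distinct clopen subsets of $\md$, and then observe that at most countably many can be of the form $[\boldsymbol{\varphi}]_D$. Since $\mathcal{L}$ and hence $\ll$ are countable, at most $\aleph_0$ sets have this form, so most of the constructed clopens witness the conclusion.

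\medskip

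First, I would extract from the non-compactness of $\Lambda$ a strictly decreasing chain $A_1 \supsetneq A_2 \supsetneq \cdots$ of nonempty basic clopens in $\td$ with $\bigcap_n A_n = \emptyset$. Starting from a finitely $\Lambda$-consistent but $\Lambda$-inconsistent family $\{\psi_n\}_{n \in \mathbb{N}} \subseteq \mathcal{L}$, I would replace $\psi_n$ by $\psi_1 \wedge \cdots \wedge \psi_n$ and prune $\Lambda$-equivalent redundancies to ensure that $\psi_{n+1} \vdash_\Lambda \psi_n$ strictly, so each $\psi_n \wedge \neg\psi_{n+1}$ is $\Lambda$-consistent. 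Using the $\Lambda$-representativeness of $D$, I would arrange the witnesses so that each $\boldsymbol{\psi_n} \in D$, making $A_n := [\boldsymbol{\psi_n}]_D$ a subbasis element of $\td$ and hence clopen. Saturation of $\md$ then guarantees that each $A_n$ and each $A_n \setminus A_{n+1}$ is nonempty, while the $\Lambda$-inconsistency of $\{\psi_n\}_n$ forces $\bigcap_n A_n = \emptyset$ (by soundness, no $x \in X$ satisfies all $\psi_n$).

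\medskip

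Next, set $B_n := A_n \setminus A_{n+1}$; these are pairwise disjoint nonempty basic clopens, which together with $\md \setminus A_1$ partition $\md$. For every $S \subseteq \mathbb{N}$ the set $U_S := \bigcup_{n \in S} B_n$ is clopen, since both $U_S$ and its complement $(\md \setminus A_1) \cup \bigcup_{n \notin S} B_n$ are unions of clopen sets, hence open. Distinct $S \neq S'$ yield distinct $U_S \neq U_{S'}$ because they differ on some nonempty $B_n$. This supplies $2^{\aleph_0}$ pairwise distinct clopens in $\td$. Since $|\ll| \leq \aleph_0$, there are at most $\aleph_0$ sets of the form $[\boldsymbol{\varphi}]_D$, so some $U_S$ is not of this form and witnesses the conclusion.

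\medskip

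The main obstacle is the step of arranging the non-compactness witness to lie within $D$, ensuring that each $A_n$ is a basic clopen. $\Lambda$-representativeness provides, for each $\boldsymbol{\psi_n}$, a deciding family $\{\boldsymbol{\chi_{n,i}}\}_{i \in I_n} \subseteq D$, but transferring the finite-consistency/total-inconsistency pattern from $\{\psi_n\}$ to a sequence of $D$-propositions is delicate. An alternative route is to first establish directly that $(\md, \td)$ is non-compact under the hypotheses, then invoke Alexander's subbase theorem to obtain a subbasic cover without finite subcover and extract the decreasing chain of clopens from its complements; the remainder of the argument is identical.
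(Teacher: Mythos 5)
Your proposal follows essentially the same route as the paper's proof: both disjointify the non-compactness witness into countably many pairwise disjoint nonempty clopen pieces (the paper's $\rho_i=\varphi_i\wedge\bigwedge_{k<i}\neg\varphi_k$ is your $B_n=A_n\setminus A_{n+1}$ in dual form), take all $2^{\aleph_0}$ unions $U_S$, each clopen because its complement is again such a union, and conclude by counting against the countability of $\boldsymbol{\mathcal{L}}_{\Lambda}$. The ``main obstacle'' you flag---getting the witnessing extensions to be genuinely clopen in $\mathcal{T}_{D}$---is a legitimate concern, but the paper simply sidesteps it by working with the sets $[\boldsymbol{\varphi_i}]$ for arbitrary formulas and treating them as open, relying on its earlier identification of $\mathcal{T}_{D}$ with the full Stone topology when $D$ is $\Lambda$-representative.
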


\begin{proof}
In this proof, we omit the subscript from $[\boldsymbol{\varphi}]_{D}\subseteq\md=\ms$.

As $\Lambda$ is not compact, we can pick a set of formulas $\chi_{i},i\in\mathbb{N}$
such that $\{\chi_{i}\colon i\in\mathbb{N}\}$ is inconsistent, yet
every finite subset of $S$ is consistent. For simplicity of notation,
define $\varphi_{i}:=\neg\chi_{i}$ As $\md$ is saturated, $\{[\boldsymbol{\varphi_{i}}]\}_{i\in\mathbb{N}}$
is an open cover of $\md$ that does not contain a finite subcover.
Let $\rho_{i}$ be the formula $\varphi_{i}\wedge\bigwedge_{k<i}\neg\varphi_{k}$.
In particular we have that $i)$ $[\boldsymbol{\rho_{i}}]\cap[\boldsymbol{\rho_{j}}]=\emptyset$
for all $i\neq j$ and $ii)$ $\bigcup_{i\in\mathbb{N}}[\boldsymbol{\varphi_{i}}]=\bigcup_{i\in\mathbb{N}}[\boldsymbol{\rho_{i}}]=\md$.
I.e., $\{[\boldsymbol{\rho_{i}}]\}_{i\in\mathbb{N}}$ is a cover of
$\md$. We further have that $[\boldsymbol{\rho_{i}}]\subseteq[\boldsymbol{\varphi_{i}}]$;
hence $\{[\boldsymbol{\rho_{i}}]\}_{i\in\mathbb{N}}$ cannot contain
a finite subcover $\{[\boldsymbol{\rho_{i}}]\}_{i\in I}$ of $\md$,
as the respective $\{[\boldsymbol{\varphi_{i}}]\}_{i\in I}$ would
form a finite cover. Wlog we assume that all $[\boldsymbol{\rho_{i}}]$
are non-empty. For all $S\subseteq\mathbb{N}$, the set $U_{S}=\bigcup_{i\in S}[\boldsymbol{\rho_{i}}]$
is open. As all $[\boldsymbol{\rho_{i}}]$ are mutually disjoint,
the complement of $U_{S}$ is $\bigcup_{i\not\in S}[\boldsymbol{\rho_{i}}]$
which is also open; hence $U_{S}$ is clopen. Again as all $[\boldsymbol{\rho_{i}}]$
are mutually disjoint and non-empty, we have that $U_{S}\neq U_{S'}$
whenever $S\neq S'$. Hence, $\{U_{S}\colon S\subseteq\mathcal{\mathbb{N}}\}$
is an uncountable family of clopen sets. As $\ll$ is countable, there
must be some element of $\{U_{S}\colon S\subseteq\mathcal{\mathbb{N}}\}$
which is not of the form $[\boldsymbol{\varphi}]$ for any $\boldsymbol{\varphi}\in\ll$.
\end{proof}

\subsection{Relations to the $n$-Bisimulation Topology\label{sec:n-bisim-topology}}

In Example \ref{ex:n-bisim.metric}, we showed that $\mathcal{D}_{(X,\boldsymbol{\mathcal{L}}_{\Lambda})}$
includes the semantically based $n$-bisimulation metric $d_{B}$
for modal languages with finite signature. The metric topology induced
by the $n$-bisimulation metric is referred to as the \textbf{$n$-bisimulation
topology}, $\mathcal{T}_{B}$. A basis for this topology is given
by all subsets of $\ms$ of the form
\[
B_{\boldsymbol{x}n}=\{\boldsymbol{y}\in\ms\colon y\leftrightarroweq_{n}x\}.
\]

By Proposition \ref{prop:same-topology} and Example \ref{ex:n-bisim.metric},
we obtain the following:
\begin{cor}
\label{cor:nBisim.def.finite.sig}If $\mathcal{L}$ has finite signature,
then the $n$-bisimulation topology $\mathcal{T}_{B}$ is the Stone(-like)
topology $\mathcal{T}_{\boldsymbol{\mathcal{L}}_{\Lambda}}$. 
\end{cor}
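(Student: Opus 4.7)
The plan is to assemble this result directly from three pieces already established in the paper, rather than doing any new topological computation. In the finite signature case, Example \ref{ex:n-bisim.metric} constructs a specific descriptor $D = \bigcup_{n \in \mathbb{N}} D_n$, where each $D_n$ is the (finite) set of $K$-propositions of characteristic formulas up to $n$-bisimulation, and a specific weight function $b$ whose induced metric $d_b \in \mathcal{D}_{(X,D)}$ is shown to satisfy $d_b = d_B$ pointwise. So the $n$-bisimulation topology $\mathcal{T}_B$ is, by definition, the metric topology of $d_b$.

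Next I would invoke Proposition \ref{prop:same-topology}: since $d_b \in \mathcal{D}_{(X,D)}$, its metric topology coincides with the Stone-like topology $\mathcal{T}_D$ on $\boldsymbol{X}_D$. Hence $\mathcal{T}_B = \mathcal{T}_D$. Finally, the descriptor $D$ built in the example is $K$-representative (this is the content of the displayed reasoning in Example \ref{ex:n-bisim.metric} just before the definition of $b$), so by Lemma \ref{lem:Repr.Descriptor} we have $\boldsymbol{X}_D = \boldsymbol{X}_{\boldsymbol{\mathcal{L}}_\Lambda}$, and by the remark immediately following Definition \ref{def:Stone-like.Topology} the Stone-like topology $\mathcal{T}_D$ agrees with the Stone topology $\mathcal{T}_{\boldsymbol{\mathcal{L}}_\Lambda}$ generated by the full Lindenbaum-quotient subbasis. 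Chaining the equalities $\mathcal{T}_B = \mathcal{T}_b = \mathcal{T}_D = \mathcal{T}_{\boldsymbol{\mathcal{L}}_\Lambda}$ closes the argument.

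As a sanity check I would also verify directly that the basis elements of $\mathcal{T}_B$ are open in $\mathcal{T}_{\boldsymbol{\mathcal{L}}_\Lambda}$ and vice versa, which makes the identification more transparent: the basic set $B_{\boldsymbol{x},n} = \{\boldsymbol{y} \in \boldsymbol{X}_{\boldsymbol{\mathcal{L}}_\Lambda} : y \leftrightarroweq_n x\}$ equals $\{\boldsymbol{y} : y \models \varphi_{x,n}\}$ by the defining property of the characteristic formula $\varphi_{x,n}$, and so it is a subbasis element of $\mathcal{T}_{\boldsymbol{\mathcal{L}}_\Lambda}$; conversely, for any $\boldsymbol{\varphi} \in \boldsymbol{\mathcal{L}}_\Lambda$, the set $[\boldsymbol{\varphi}]$ is a (finite) union of classes $B_{\boldsymbol{x},n}$ with $n = md(\varphi)$, because in finite signature there are only finitely many $n$-bisimulation classes.

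There is no real obstacle; the only thing to be careful about is the role of the finite signature hypothesis, which enters in two places that must both be honored. First, it is needed so that each $D_n$ is finite and the series $\sum_k b(\boldsymbol{\varphi}_k)$ converges, i.e., so that $b$ is actually a weight function; and second, it ensures the existence of characteristic formulas $\varphi_{x,n}$ up to $n$-bisimulation used to express $B_{\boldsymbol{x},n}$ as an extension. The subsequent remark in Example \ref{ex:n-bisim.metric} already flags that without finiteness the encoding breaks down, which justifies why the corollary is stated only under the finite signature assumption.
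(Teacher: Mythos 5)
Your proposal is correct and follows exactly the paper's route: the paper derives this corollary directly from Proposition \ref{prop:same-topology} together with Example \ref{ex:n-bisim.metric}, which is precisely the chain $\mathcal{T}_B = \mathcal{T}_b = \mathcal{T}_D = \mathcal{T}_{\boldsymbol{\mathcal{L}}_{\Lambda}}$ you assemble (the paper leaves the details implicit, while you spell out the role of Lemma \ref{lem:Repr.Descriptor} and of the finite-signature hypothesis, and add a direct basis-element check). No gaps.
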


This is not the case in general:
\begin{prop}
\label{prop:nBisim-is-not-Stone}If $\mathcal{L}$ is based on an
infinite set of atoms, then the $n$-bisimulation topology $\mathcal{T}_{B}$
is strictly finer than the Stone(-like) topology $\mathcal{T}_{\boldsymbol{\mathcal{L}}_{\Lambda}}$
on $\ms$.
\end{prop}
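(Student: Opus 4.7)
The plan is to establish the strictness by (a) showing that every Stone(-like) open set is $\mathcal{T}_B$-open, and (b) exhibiting a single basic $\mathcal{T}_B$-open set that is not Stone-open.

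For (a), I would use the standard refinement criterion (cf. \cite[Lem. 13.3]{Munkres}) already invoked in Proposition \ref{prop:same-topology}. It suffices to handle a subbasic Stone-open set $[\boldsymbol{\varphi}]$, $\boldsymbol{\varphi}\in\boldsymbol{\mathcal{L}}_{\Lambda}$. Given $\boldsymbol{x}\in[\boldsymbol{\varphi}]$, pick any representative $\varphi\in\boldsymbol{\varphi}$; this formula has some finite modal depth $n$. Since $n$-bisimilar pointed models satisfy the same formulas of modal depth at most $n$, we get $B_{\boldsymbol{x}n}\subseteq[\boldsymbol{\varphi}]$, so $[\boldsymbol{\varphi}]\in\mathcal{T}_B$.

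For (b), I would fix some $\boldsymbol{x}\in\ms$ and argue that the basic $\mathcal{T}_B$-open set $B_{\boldsymbol{x}0}$ (the $0$-bisimulation class) is not Stone-open. Suppose toward contradiction that it were. Then there would exist $\boldsymbol{\varphi}\in\boldsymbol{\mathcal{L}}_{\Lambda}$ with $\boldsymbol{x}\in[\boldsymbol{\varphi}]\subseteq B_{\boldsymbol{x}0}$. Any representative $\varphi\in\boldsymbol{\varphi}$ is a single finite modal formula and hence mentions only finitely many atoms; since $\Phi$ is infinite by hypothesis, pick $p\in\Phi$ not occurring in $\varphi$. Let $\boldsymbol{y}$ be represented by the pointed model obtained from $x$ by flipping the valuation of $p$ at the designated world while leaving the accessibility relation and the valuation of every other atom at every world intact. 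A standard locality argument on the truth definition yields $y\models\varphi \Leftrightarrow x\models\varphi$, so $\boldsymbol{y}\in[\boldsymbol{\varphi}]$; on the other hand $x$ and $y$ disagree on $p$ at the point, so $x\not\leftrightarroweq_{0}y$ and $\boldsymbol{y}\notin B_{\boldsymbol{x}0}$, contradicting $[\boldsymbol{\varphi}]\subseteq B_{\boldsymbol{x}0}$.

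The main obstacle is ensuring that the witness $\boldsymbol{y}$ actually lies in $\ms$. The construction is transparent when $X$ is the class of all pointed Kripke models (or closed under single-atom flips at the designated world), which I take to be the ambient setting. More carefully one needs $X$ saturated enough — in the spirit of the notion preceding Proposition \ref{prop:compact} — to realize the $\boldsymbol{\mathcal{L}}_{\Lambda}$-type that agrees with $\boldsymbol{x}$ everywhere except on $p$ at the point; if $\Lambda$ is a normal modal logic validated by the modified model (as is automatic for $K$), this richness assumption is the only hypothesis the argument really needs.
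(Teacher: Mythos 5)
Your proposal is correct and takes essentially the same route as the paper: the same modal-depth argument gives the inclusion of the Stone(-like) topology in $\mathcal{T}_{B}$, and the same witness $B_{\boldsymbol{x}0}$ gives strictness. Your part (b) is in fact more explicit than the paper's (which merely remarks that $[\boldsymbol{\varphi}]\subseteq B_{\boldsymbol{x}0}$ would require $\varphi$ to entail every atom or its negation), and your closing observation that the atom-flipped witness must actually lie in $X$ correctly flags a richness assumption the paper leaves implicit.
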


\begin{proof}
\noindent To see that the Stone(-like) topology is not as fine as the $n$-bisimulation
topology, consider the basis element $B_{\boldsymbol{x}0}$, containing
exactly the elements $\boldsymbol{y}$ such that $y$ and $x$ are
$0$-bisimilar, i.e., share atomic valuation. Clearly, $\boldsymbol{x}\in B_{\boldsymbol{x}0}$.
There is no formula $\varphi$ for which the Stone basis element $B=\{\boldsymbol{z}\in\boldsymbol{X}\colon z\models\varphi\}$
contains $\boldsymbol{x}$ and is contained in $B_{\boldsymbol{x}0}$:
This would require that $\varphi$ implied every atom or its negation,
requiring the strength of an infinitary conjunction.

For the inclusion of the Stone(-like) topology in the $n$-bisimulation
topology, consider any $\varphi\in\mathcal{L}$ and the corresponding
Stone basis element $B=\{\boldsymbol{y}\in\boldsymbol{X}\colon y\models\varphi\}$.
Assume $\boldsymbol{x}\in B$. Let the modal depth of $\varphi$ be
$n$. Then for every $\boldsymbol{z}\in B_{\boldsymbol{x}n}$, $z\models\varphi$.
Hence $\boldsymbol{x}\in B_{\boldsymbol{x}n}\subseteq B$.
\end{proof}
The discrepancy in induced topologies results as the $n$-bisimulation
metric, in the infinite case, introduces distinctions not made by
the logic: In the infinite case, there does not exist a characteristic
formula $\varphi_{x,n}$ satisfied only by models $n$-bisimilar with
$x$.

\paragraph*{Non-compactness.}

Even if $\ms$ is compact in the Stone(-like) topology, it need not
be compact in the $n$-bisimulation topology: Let $\mathcal{L}$ be
based on an infinite set of atoms $\Phi$ and $X$ a set of pointed
models saturated with respect to $\boldsymbol{\mathcal{L}}_{\Lambda}$.
Then $\ms$ is compact in the Stone(-like) topology. It is not compact
in the $n$-bisimulation topology: $\{B_{\boldsymbol{x}0}\colon x\in X\}$
is an open cover of $\ms$ which contains no finite subcover.

\paragraph*{Relations to Goranko (2004).}

Corollary \ref{cor:nBisim.def.finite.sig} and Proposition \ref{prop:nBisim-is-not-Stone}
jointly relate our metrics to the metric introduced by Valentin Goranko
in \cite{Goranko2004} on first-order theories. The straight-forward
alteration of that metric to suit a modal space $\boldsymbol{X}_{\boldsymbol{\mathcal{L}}_{\Lambda}}$
is
\[
d_{g}(\boldsymbol{x},\boldsymbol{y})=\begin{cases}
0 & \text{ if }\boldsymbol{x}=\boldsymbol{y}\\
\frac{1}{n+1} & \text{ if }n\text{ is the least intenger such that }n(\boldsymbol{x})\not=n(\boldsymbol{y})
\end{cases}
\]
where $n(\boldsymbol{x})$ is the set of formulas of modal depth $n$
satisfied by $x\in\boldsymbol{x}$. 

The induced topology of this metric is exactly the $n$-bisimulation
topology. Hence, for languages with finite signature, every metric
in our family $\mathcal{D}_{(X,\boldsymbol{\mathcal{L}}_{\Lambda})}$
induces the same topology as $d_{g}$, but the induced topologies
differ on languages with infinitely many atoms. 

Goranko notes in \cite{Goranko2004} that his topological approach
to prove relative completeness may, given a bit of work, be applied
in a modal logical setting.\footnote{See \S 6, especially the final paragraph.}
Replacing, in our approach, the modal space $\boldsymbol{X}_{\boldsymbol{\mathcal{L}}_{\Lambda}}$
with the quotient space of $X$ under bisimulation would, we venture,
supply the stepping stone. We omit a detour into the details in favor
of working with Stone-like topologies.

\section{Maps and Model Transformations\label{sec:Maps}}

In dynamic epistemic logic, dynamics are introduced by transitioning
between pointed Kripke models from some set $X$ using a possibly
partial map $f:X\longrightarrow X$ often referred to as a \textbf{model
transformer}. Many model transformers have been suggested in the literature,
the most well-known being \textbf{truthful public announcement} \cite{Plaza1989},
$!\varphi$, which maps $x$ to $x_{|\varphi}$, restriction of $x$
to the truth set of $\varphi$. Truthful public announcements are
a special case of a rich class of model transformers definable through
a particular graph product, \emph{product update}, of pointed Kripke
models with \emph{action models}. Due to their generality, popularity
and wide applicability, we focus on a general class of maps on modal
spaces induced by action models applied using product update.

An especially general version of action models is \emph{multi-pointed}
action models with \emph{postconditions}. Postconditions allow action
states in an action model to change the valuation of atoms \cite{Benthem2006_com-change,Ditmarsch_Kooi_ontic},
thereby also allowing the representation of information dynamics concerning
situations that are not factually static. Permitting multiple points
allows the actual action states executed to depend on the pointed
Kripke model to be transformed, thus generalizing single-pointed action
models. Multi-pointed action models are also referred to as \emph{epistemic
programs} in \cite{BaltagMoss2004}, and allow encodings akin to \emph{knowledge-based
programs} \cite{Fagin_etal_1995} of interpreted systems, cf. \cite{Rendsvig-DS-DEL-2015}.
Allowing for multiple points renders the class of action models \textbf{Turing
complete} \cite{BolanderBirkegaard2011}, even when not allowing for
atomic valuation change using postconditions \cite{KleinRendsvigTuring}.

\subsection{Action Models and Product Update}

A \textbf{multi-pointed action model} is a tuple $\Sigma{\scriptstyle \Gamma}=(\llbracket\Sigma\rrbracket,\mathsf{R},pre,post,\Gamma)$
where $\left\llbracket \Sigma\right\rrbracket $ is a countable, non-empty
set of \textbf{actions}. The map $\mathsf{R}:\mathcal{I}\rightarrow\mathcal{P}(\left\llbracket \Sigma\right\rrbracket \times\left\llbracket \Sigma\right\rrbracket )$
assigns an \textbf{accessibility relation} $\mathsf{R}_{i}$ on $\llbracket\Sigma\rrbracket$
to each agent $i\in\mathcal{I}$. The map ${pre:\left\llbracket \Sigma\right\rrbracket \rightarrow\mathcal{L}}$
assigns to each action a \textbf{precondition}, and the map ${post:\left\llbracket \Sigma\right\rrbracket \rightarrow\mathcal{L}}$
assigns to each action a \textbf{postcondition},\footnote{The precondition of $\sigma$ specify the conditions under which $\sigma$
is executable, while its postcondition may dictate the posterior values
of a finite, possibly empty, set of atoms.} which must be $\top$ or a conjunctive clause\footnote{I.e. a conjuction of literals, where a literal is an atom or a negated
atom.} over $\Phi$. Finally, $\emptyset\not=\Gamma\subseteq\left\llbracket \Sigma\right\rrbracket $
is the set of \textbf{designated actions}.

To obtain well-behaved total maps on a modal spaces, we must invoke
a set of mild, but non-standard, requirements: Let $X$ be a set of
pointed Kripke models. Call $\Sigma{\scriptstyle \Gamma}$ \textbf{precondition
finite} if the set $\{\boldsymbol{pre(\sigma)}\in\boldsymbol{\mathcal{L}}_{\Lambda}\colon\sigma\in\left\llbracket \Sigma\right\rrbracket \}$
is finite. This is needed for our proof of continuity. Call $\Sigma{\scriptstyle \Gamma}$
\textbf{exhaustive over }$X$ if for all $x\in X$, there is a $\sigma\in\Gamma$
such that $x\vDash pre(\sigma)$. This conditions ensures that the
action model $\Sigma{\scriptstyle \Gamma}$ is universally applicable
on $X$. Finally, call $\Sigma{\scriptstyle \Gamma}$ \textbf{deterministic
over }$X$ if $X\vDash pre(\sigma)\wedge pre(\sigma')\rightarrow\bot$
for each $\sigma\neq\sigma'\in\Gamma$. Together with exhaustivity,
this condition ensures that the product of $\Sigma{\scriptstyle \Gamma}$
and any $Ms\in X$ is a (single-)pointed Kripke model, i.e., that
the actual state after the updates is well-defined and unique.

Let $\Sigma{\scriptstyle \Gamma}$ be exhaustive and deterministic
over $X$ and let $Ms\in X$. Then the \textbf{product update} of
$Ms$ with $\Sigma{\scriptstyle \Gamma}$, denoted $Ms\otimes\Sigma{\scriptstyle \Gamma}$,
is the pointed Kripke model $(\left\llbracket M\Sigma\right\rrbracket ,R',\llbracket\cdot\rrbracket',s')$
with
\begin{eqnarray*}
\left\llbracket M\Sigma\right\rrbracket  & = & \left\{ (s,\sigma)\in\left\llbracket M\right\rrbracket \times\left\llbracket \Sigma\right\rrbracket :(M,s)\vDash pre(\sigma)\right\} \\
R' & = & \left\{ ((s,\sigma),(t,\tau)):(s,t)\in R_{i}\mbox{ and }(\sigma,\tau)\in\mathsf{R}_{i}\right\} ,\text{ for all }i\in\mathcal{I}\\
\left\llbracket p\right\rrbracket ' & = & \left\{ (s,\sigma)\!:\!s\in\left\llbracket p\right\rrbracket \!,post(\sigma)\nvDash\neg p\right\} \cup\left\{ (s,\sigma)\!:\!post(\sigma)\vDash p\right\} ,\text{ for all }p\in\Phi\\
s' & = & (s,\sigma):\sigma\in\Gamma\mbox{ and }Ms\vDash pre(\sigma)
\end{eqnarray*}

\noindent Call $\Sigma{\scriptstyle \Gamma}$ \textbf{closing over
$X$} if for all $x\in X,$ $x\otimes\Sigma{\scriptstyle \Gamma}\in X$.
With exhaustivity and deterministicality, this ensures that $\Sigma{\scriptstyle \Gamma}$
and $\otimes$ induce well-defined total map on $X$.

\subsection{Clean Maps on Modal Spaces}

Action models applied using product update yield natural maps on
modal spaces $\ms$. The class of maps of interest in the present
is thus the following:
\begin{defn}
Let $\ms$ be a modal space. A map $\boldsymbol{f}:\ms\rightarrow\ms$
is called \textbf{clean} if there exists a precondition finite, multi-pointed
action model $\Sigma{\scriptstyle \Gamma}$ closing, deterministic
and exhaustive over $X$ such that $\boldsymbol{f}(\boldsymbol{x})=\boldsymbol{y}$
iff $x\otimes\Sigma{\scriptstyle \Gamma}\in\boldsymbol{y}$ for all
$\boldsymbol{x}\in\ms$.
\end{defn}

\begin{rem}
Replacing $\ms$ with $\boldsymbol{X}_{D}$ for arbitrary descriptor
$D\subseteq\boldsymbol{\mathcal{L}}_{\Lambda}$ in the definition
of clean maps will not in general result in objects well-defined.
E.g.: Let $p$ and $q$ be atoms of $\mathcal{L}$ and let $D=\{\boldsymbol{p},\boldsymbol{\neg p}\}$.
Let $\Sigma{\scriptstyle \Gamma}$ have $\llbracket\Sigma\rrbracket=\Gamma=\{\sigma,\tau\}$
with $pre(\sigma)=q,pre(\tau)=\neg q$ and $post(\sigma)=\top$, $post(\tau)=p$.
Then for $x\models p\wedge q$ and $y\models p\wedge\neg q$, $y\in\boldsymbol{x}\in\boldsymbol{X}_{D}$,
but $y\otimes\Sigma{\scriptstyle \Gamma}\notin\boldsymbol{x\otimes\Sigma{\scriptstyle \Gamma}}$.
For $\Lambda$-representative descriptors, clean maps are, of course
well-defined 
\end{rem}

\medskip{}

Below, we show that clean maps are continuous with respect to the
Stone(-like) topology on $\ms$. For that proposition, we observe
that. By proposition ... and Lemma ...
\begin{rem}
By Proposition \ref{prop:same-topology} and Lemma \ref{lem:Repr.Descriptor}the
following analysis equally applies to the Stone(-like) topology on
$X_{D}$ for any $\Lambda$-represenative descriptor $D$.
\end{rem}

\begin{prop}
Any clean map $\boldsymbol{f}$ on the modal space $\boldsymbol{X}_{\boldsymbol{\mathcal{L}}_{\Lambda}}$
is total and well-defined.
\end{prop}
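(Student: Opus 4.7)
The proof splits into showing that $\boldsymbol{f}$ is \textbf{total} (defined on every $\boldsymbol{x} \in \ms$) and \textbf{well-defined} (its value does not depend on the choice of representative $x \in \boldsymbol{x}$).

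Totality is essentially packaged into the clean-map hypotheses. Pick any $\boldsymbol{x} \in \ms$ and any representative $x \in \boldsymbol{x}$. By exhaustivity of $\Sigma{\scriptstyle \Gamma}$ over $X$ there is some $\sigma \in \Gamma$ with $x \models pre(\sigma)$, and by determinism this $\sigma$ is unique, so $x \otimes \Sigma{\scriptstyle \Gamma}$ is a well-defined pointed Kripke model. The closing property then places $x \otimes \Sigma{\scriptstyle \Gamma} \in X$, and it thus belongs to a unique equivalence class in $\ms$ which serves as $\boldsymbol{f}(\boldsymbol{x})$.

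Well-definedness is the substantive obligation. Suppose $x, y \in \boldsymbol{x}$; then $x$ and $y$ satisfy exactly the same formulas of $\mathcal{L}$, and the task is to show $x \otimes \Sigma{\scriptstyle \Gamma}$ and $y \otimes \Sigma{\scriptstyle \Gamma}$ do likewise, so that they share an equivalence class. My plan is to invoke the standard DEL reduction technique: for every $\chi \in \mathcal{L}$ I will construct, by induction on the structure of $\chi$, a formula $\chi^{\star} \in \mathcal{L}$ such that $z \otimes \Sigma{\scriptstyle \Gamma} \models \chi$ iff $z \models \chi^{\star}$, for every $z \in X$. Once established, applying this with $z = x$ and $z = y$ and using $\mathcal{L}$-equivalence of $x$ and $y$ on $\chi^{\star}$ immediately transfers to $\mathcal{L}$-equivalence of $x \otimes \Sigma{\scriptstyle \Gamma}$ and $y \otimes \Sigma{\scriptstyle \Gamma}$ on $\chi$.

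The atomic case exploits that, for each designated action $\sigma \in \Gamma$, the value of an atom $p$ at $(s_{z},\sigma)$ is dictated by $post(\sigma)$ and by whether $z \models p$; a case split on $\sigma$ via the preconditions (which under determinism and precondition-finiteness force $\Gamma$ itself to be effectively finite, since distinct designated $\sigma$'s must carry inequivalent and mutually inconsistent preconditions) puts the reduction into $\mathcal{L}$. Booleans go through compositionally. The main obstacle is the modal case $\chi = \square_{i}\psi$, whose reduction must range over $\{\tau : \sigma \mathsf{R}_{i}\tau\}$---potentially an infinite set, since $\llbracket\Sigma\rrbracket$ is only required countable. Here precondition-finiteness of $\Sigma{\scriptstyle \Gamma}$ is precisely the lever that collapses this quantification to finitely many syntactically distinguishable cases: by grouping accessible actions according to their precondition class and combining them with the inductively obtained reductions $\psi^{\tau}$, one keeps $\chi^{\star}$ a finitary formula of $\mathcal{L}$. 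This modal step is the delicate point at which most care is needed.
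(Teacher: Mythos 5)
Your argument is correct in outline but takes a genuinely different route from the paper's. The paper disposes of well-definedness in one line: product update preserves bisimulation (citing Baltag--Moss), and bisimulation implies modal equivalence, so representatives of $\boldsymbol{x}$ are sent to representatives of a single class. Your reduction-axiom strategy is heavier machinery, but it buys something real: it works directly at the level of modal equivalence, whereas the bisimulation-preservation argument, read literally, only covers \emph{bisimilar} representatives --- and members of the same class $\boldsymbol{x}\in\ms$ are guaranteed only to be $\mathcal{L}_{\Lambda}$-equivalent, which need not imply bisimilarity outside image-finite (Hennessy--Milner) classes. So your route, if completed, closes a step the paper's proof leaves implicit. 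Your totality argument coincides with the paper's (exhaustivity plus determinism plus closure).

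There is, however, a gap at exactly the point you flag. In the case $\chi=\square_{i}\psi$, grouping the accessible actions $\tau$ by precondition class is not enough: two actions with $\Lambda$-equivalent preconditions may have different postconditions and different onward accessibility, hence different reduction formulas $\psi^{\tau}$, so the conjunction $\bigwedge_{\sigma\mathsf{R}_{i}\tau}\square_{i}\bigl(pre(\tau)\rightarrow\psi^{\tau}\bigr)$ can still have infinitely many pairwise inequivalent conjuncts. To repair this you must strengthen the induction hypothesis: show simultaneously that for each $\psi$ the set $\{\boldsymbol{\psi^{\tau}}:\tau\in\llbracket\Sigma\rrbracket\}$ of reduction formulas is finite up to $\Lambda$-equivalence, and then group accessible actions by the \emph{pair} consisting of precondition class and reduction-formula class. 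This does go through (for atoms the reduction depends only on the precondition class and on which of the three postcondition cases obtains; for $\square_{i}\psi$ the reduction for each $\tau$ is determined by a subset of a finite set of conjuncts, and a finite set has only finitely many subsets), but as written your sketch asserts the collapse without the strengthened hypothesis that makes it true. Also note that determinism only tames $\Gamma$, not $\llbracket\Sigma\rrbracket$, so this finiteness-up-to-equivalence bookkeeping is genuinely needed for the non-designated actions.
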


\begin{proof}
\noindent \noindent Clean maps are total on by the assumptions of the underlying
action model being closing and exhaustive. They are well-defined as
$\boldsymbol{f}(\boldsymbol{x})$ is independent of the choice of
representative for $\boldsymbol{x}$: If $x'\in\boldsymbol{x}$, then
$x'\otimes\Sigma{\scriptstyle \Gamma}$ and $x\otimes\Sigma{\scriptstyle \Gamma}$
are modally equivalent and hence define the same point in $\boldsymbol{X}_{\boldsymbol{\mathcal{L}}_{\Lambda}}$.
The latter follows as multi-pointed action models applied using product
update preserve bisimulation \cite{BaltagMoss2004}, which implies
modal equivalence.\medskip{}
\end{proof}
In general, the same clean map may be induced by several different
action models. In showing clean maps continuous, we will make use
of the following:
\begin{lem}
\label{lem:disjoint model representer} Let $\boldsymbol{f}:\ms\rightarrow\ms$
be a clean map based on $\Sigma{\scriptstyle \Gamma}$. Then there
exists an $\Sigma'{\scriptstyle \Gamma'}$ also inducing $\boldsymbol{f}$
such that for all $\sigma,\sigma'\in\left\llbracket \Sigma'\right\rrbracket $,
either $\models pre(\sigma)\wedge pre(\sigma')\rightarrow\bot$ or
$\models pre(\sigma)\leftrightarrow pre(\sigma')$.
\end{lem}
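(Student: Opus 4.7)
The plan is to refine the given action model by splitting each action along the atoms of the finite Boolean algebra generated by its preconditions. Because $\Sigma{\scriptstyle \Gamma}$ is precondition finite, I would fix representatives $\varphi_1,\ldots,\varphi_n$ of the finitely many $\Lambda$-equivalence classes $\{\boldsymbol{pre(\sigma)}:\sigma\in\llbracket\Sigma\rrbracket\}$, and for each $J\subseteq\{1,\ldots,n\}$ set $\alpha_J=\bigwedge_{i\in J}\varphi_i\wedge\bigwedge_{i\notin J}\neg\varphi_i$. Let $\mathcal{A}$ denote the set of $J$ for which $\alpha_J$ is $\Lambda$-consistent. These are pairwise $\Lambda$-inconsistent and their disjunction is a theorem, and each $pre(\sigma)$ is $\Lambda$-equivalent to $\bigvee\{\alpha_J:\alpha_J\vdash_\Lambda pre(\sigma)\}$.

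Next I would define $\Sigma'{\scriptstyle \Gamma'}$ as follows: $\llbracket\Sigma'\rrbracket=\{\sigma_J:\sigma\in\llbracket\Sigma\rrbracket,\,J\in\mathcal{A},\,\alpha_J\vdash_\Lambda pre(\sigma)\}$, with $pre(\sigma_J)=\alpha_J$, $post(\sigma_J)=post(\sigma)$, $(\sigma_J,\tau_K)\in\mathsf{R}'_i$ iff $(\sigma,\tau)\in\mathsf{R}_i$, and $\Gamma'=\{\sigma_J\in\llbracket\Sigma'\rrbracket:\sigma\in\Gamma\}$. The conclusion of the lemma is then immediate by construction: two preconditions in $\llbracket\Sigma'\rrbracket$ either share the same $J$ (hence are literally equal) or they have distinct $J$-indices (hence are $\Lambda$-inconsistent). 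I would then verify that $\Sigma'{\scriptstyle \Gamma'}$ satisfies the remaining structural conditions: precondition finiteness is clear from $|\mathcal{A}|\leq 2^n$; exhaustivity over $X$ holds because each $x\in X$ satisfies a unique $\alpha_{J(x)}$, and the original $\sigma\in\Gamma$ with $x\models pre(\sigma)$ yields $\sigma_{J(x)}\in\Gamma'$; and determinism over $X$ follows because two distinct members of $\Gamma'$ either differ in their $J$-index (mutually inconsistent preconditions) or share the $J$-index but come from $\sigma\neq\tau\in\Gamma$, whence $\alpha_J\vdash pre(\sigma)\wedge pre(\tau)\vdash\bot$ contradicting $J\in\mathcal{A}$.

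The main substance is checking that $\Sigma'{\scriptstyle \Gamma'}$ induces the same clean map $\boldsymbol{f}$, which will also entail the closing condition. For any $Ms\in\boldsymbol{x}\in\ms$, I claim the assignment $(s,\sigma)\mapsto(s,\sigma_{J(s)})$, where $J(s)\in\mathcal{A}$ is the unique index with $s\models\alpha_{J(s)}$, is a well-defined bijection between the state spaces of $Ms\otimes\Sigma{\scriptstyle \Gamma}$ and $Ms\otimes\Sigma'{\scriptstyle \Gamma'}$. It is well-defined because $s\models pre(\sigma)$ together with $s\models\alpha_{J(s)}$ forces $\alpha_{J(s)}\vdash_\Lambda pre(\sigma)$, so $\sigma_{J(s)}\in\llbracket\Sigma'\rrbracket$; conversely, every new state $(s,\sigma_J)$ satisfies $s\models\alpha_J$, so necessarily $J=J(s)$, yielding injectivity. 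Accessibility relations transfer by the very definition of $\mathsf{R}'_i$, postconditions are unchanged, and the designated state is preserved by the determinism argument above. Hence the two product updates are isomorphic pointed Kripke models, modally equivalent in particular, so they determine the same element of $\ms$, so $\Sigma'{\scriptstyle \Gamma'}$ induces $\boldsymbol{f}$. The main obstacle is keeping this verification clean in the presence of the two caveats of product update — that $\sigma_J$ might fail to be applicable at $s$ if $J\neq J(s)$, and that $\Gamma'$ could in principle contain several designated copies of the same $\sigma$ — both of which are controlled by the fact that the $\alpha_J$'s partition the satisfaction behaviour of the $\varphi_i$'s.
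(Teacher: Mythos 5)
Your proposal is correct and follows essentially the same route as the paper's proof: both refine $\Sigma{\scriptstyle \Gamma}$ by splitting each action along the atoms $\bigwedge_i\chi_i$, $\chi_i\in\{\varphi_i,\neg\varphi_i\}$, of the finite Boolean algebra generated by the (finitely many, up to $\Lambda$-equivalence) preconditions, and then check that the refined model induces the same map. Your verification that $(s,\sigma)\mapsto(s,\sigma_{J(s)})$ is an isomorphism of the two product models is a slightly sharper rendering of the bisimulation the paper exhibits, but the underlying construction is identical.
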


\begin{proof}
 Assume we are given any precondition finite, multi-pointed action
model $\Sigma{\scriptstyle \Gamma}$ deterministic over $X$ generating
$\boldsymbol{f}$. We construct an equivalent action model, $\Sigma'{\scriptstyle \Gamma'}$,
with the desired property.

For the preconditions, note that for every finite set of formulas
$S=\{\varphi_{1}\ldots\varphi_{n}\}$ there is some set formulas $\{\psi_{1},\ldots,\psi_{m}\}$
where all$\psi_{i},$ and $\psi_{j}$ are either logically equivalent
or mutually inconsisent such that each $\varphi\in S$ there is some
$J(\varphi)\subseteq\{1,\ldots,m\}$ such that $\vDash\bigvee_{k\in J(\varphi)}\psi_{k}\leftrightarrow\varphi$.
One suitable candidate for such a set is $\{\bigwedge_{k\leq n}\chi_{k}\colon\chi_{k}\in\{\varphi_{k},\neg\varphi_{k}\}\}$:
The disjunction of all conjunctions with $\chi_{k}=\varphi_{k}$ is
equivalent with $\varphi_{k}$. 

By assumption, $S=\{pre(\sigma)\colon\sigma\in\left\llbracket \Sigma\right\rrbracket \}$
is finite. Let $\{\psi_{1}\ldots\psi_{m}\}$ and $J(\varphi)$ be
as above. Construct $\Sigma'{\scriptstyle \Gamma'}$ as follows: For
every $\sigma\in\left\llbracket \Sigma\right\rrbracket $ and every
$\psi\in J(pre(\sigma))$, the set $\left\llbracket \Sigma'\right\rrbracket $
contains a state $e^{\sigma,\psi}$ with $pre(e^{\{\sigma,\psi\}})=\psi$
and $post(e^{\{\sigma,\psi\}})=post(\sigma)$. Let $R'$ be given
by $(e^{\sigma,\psi},e^{\sigma',\psi'})\in R'$ iff $(\sigma,\sigma')\in R$.
Finally, let $\Gamma'=\{e^{\{\sigma,\psi\}}\colon\sigma\in\Gamma\}$. 

The resulting multi-pointed action model $\Sigma'{\scriptstyle \Gamma'}$
is again precondition finite and deterministic over $X$ while having
either preconditions satisfying for all $\sigma,\sigma'\in\left\llbracket \Sigma'\right\rrbracket $,
either $\models pre(\sigma)\wedge pre(\sigma')\rightarrow\bot$ or
$\models pre(\sigma)\leftrightarrow pre(\sigma')$. Moreover, for
any $x\in X$, the models $x\otimes\Sigma{\scriptstyle \Gamma}$ and
$x\otimes\Sigma'{\scriptstyle \Gamma'}$\textbackslash{}in X $f(x)$
and $f'(x)$ are bisimilar witnessed by the relation connecting $(s,\sigma)\in\left\llbracket f(x)\right\rrbracket $
and $(s',e^{\sigma',\psi})\in\left\llbracket f'(x)\right\rrbracket $
iff $s=s'$ and $\sigma=\sigma'$. Hence, the maps $\boldsymbol{f},\ \boldsymbol{f'}:\ms\rightarrow\ms$
defined by $\boldsymbol{x}\rightarrow\boldsymbol{x\oplus\Sigma{\scriptstyle \Gamma}}$
and $\boldsymbol{x}\rightarrow\boldsymbol{x\oplus\Sigma'{\scriptstyle \Gamma'}}$
are the same.
\end{proof}

\subsection{Continuity of Clean Maps}

We show that the metrics introduced are reasonable with respect to
the analysis of dynamics modeled using clean maps by showing that
such a \textbf{continuous} in the induced topology:
\begin{prop}
\label{continuous} Any clean map $\boldsymbol{f}:\ms\rightarrow\ms$
is uniformly continuous in the metric space $(\ms,d_{w})$, for any
$d_{w}\in\mathcal{D}_{(X,D)}$ for $D$ $\Lambda$-representative.
\end{prop}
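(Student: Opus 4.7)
Plan: The goal is to produce, for every $\epsilon > 0$, a single $\delta > 0$ such that $d_w(\boldsymbol{x}, \boldsymbol{y}) < \delta$ forces $d_w(f(\boldsymbol{x}), f(\boldsymbol{y})) < \epsilon$ for every $\boldsymbol{x}, \boldsymbol{y} \in \ms$. The central idea is to use convergence of the weighted series defining $d_w$ to discard the infinite tail, and then to pull back the remaining finitely many output coordinates to a finite set of input coordinates via the reduction axioms for product update.

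Given $\epsilon > 0$, I would first choose $N \in \mathbb{N}$ with $\sum_{k > N} w(\boldsymbol{\varphi}_k) < \epsilon$, so that disagreement on all $\boldsymbol{\varphi}_k$ with $k > N$ can contribute at most $\epsilon$ to $d_w(f(\boldsymbol{x}), f(\boldsymbol{y}))$. Using Lemma \ref{lem:disjoint model representer}, I may assume the action model $\Sigma{\scriptstyle \Gamma}$ inducing $f$ has preconditions that are pairwise $\Lambda$-equivalent or $\Lambda$-inconsistent, which yields a clean case analysis. For each $i \leq N$, the standard reduction axioms for product update then yield a formula $\tilde{\varphi}_i \in \mathcal{L}$ such that $f(x) \models \varphi_i$ iff $x \models \tilde{\varphi}_i$. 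Because $\Sigma{\scriptstyle \Gamma}$ is precondition-finite and $\varphi_i$ is a fixed finite formula, the recursion terminates and each $\tilde{\varphi}_i$ is a single finite $\mathcal{L}$-formula.

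The core step is then to extract, for each $i \leq N$, a \emph{finite} subset $F_i \subseteq D$ such that any two models agreeing on every formula in $F_i$ agree on $\tilde{\varphi}_i$. The $\Lambda$-representativeness of $D$ supplies a determining set for $\tilde{\varphi}_i$; by recursing on the syntactic build of $\tilde{\varphi}_i$ (finitely many atoms, preconditions, modal operators and Boolean connectives, each treated separately) one assembles $F_i$ as a finite union of determining sets for the building blocks. Taking $F = \bigcup_{i \leq N} F_i$ and $\delta = \min_{\boldsymbol{\psi} \in F} w(\boldsymbol{\psi}) > 0$ closes the argument: $d_w(\boldsymbol{x}, \boldsymbol{y}) < \delta$ forces $x,y$ to agree on every $\boldsymbol{\psi} \in F$, hence on each $\tilde{\varphi}_i$, hence $f(x)$ and $f(y)$ agree on $\boldsymbol{\varphi}_1, \ldots, \boldsymbol{\varphi}_N$, yielding $d_w(f(\boldsymbol{x}), f(\boldsymbol{y})) \leq \sum_{k > N} w(\boldsymbol{\varphi}_k) < \epsilon$.

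The principal obstacle is the finiteness of each $F_i$. Representativeness as defined here only guarantees existence of a possibly infinite determining set for $\tilde{\varphi}_i$, so finiteness has to be recovered either from the recursive syntactic structure of $\tilde{\varphi}_i$ together with finite-determination of each of its atomic components, or by invoking logical compactness of $\Lambda$ (as the paper already does in Proposition \ref{prop:compact} and Proposition \ref{prop:stoneclopen}), under which any determining set entailing $\tilde{\varphi}_i$ or its negation can be thinned to a finite one. Either route demands care to ensure the induction terminates and produces a genuinely finite $F_i$ uniformly over $\boldsymbol{x}, \boldsymbol{y}$.
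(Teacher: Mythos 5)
Your overall strategy is sound and takes a genuinely different route from the paper's. Where you translate each of the finitely many relevant output formulas $\varphi_i$ into a single input formula $\tilde{\varphi}_i$ via reduction axioms and then pull $\tilde{\varphi}_i$ back into $D$, the paper never syntactically reduces $f$ at all: it proves, by induction on the complexity of $\varphi$, that for each $\varphi$ there is a $\boldsymbol{\delta}(\varphi)>0$ such that $f(x)\models\varphi$ and $d_w(\boldsymbol{x},\boldsymbol{y})<\boldsymbol{\delta}(\varphi)$ force $f(y)\models\varphi$. Its modal cases are handled semantically, by pulling back finitely many formulas of the form $\Diamond(pre(\sigma)\wedge\chi_i)$, where the $\chi_i$ form a finite ``$\epsilon$-net'' of conjunctions of descriptor formulas supplied by Lemma \ref{lem:aux}.\ref{aux1}, and by applying the induction hypothesis to a variant clean map with a re-designated action. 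Your route, if completed, would be cleaner and would localize all the dynamic content in one translation step.

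There is, however, a genuine gap at that translation step. The paper allows $\left\llbracket \Sigma\right\rrbracket$ and $\Gamma$ to be countably infinite; precondition finiteness bounds only the number of preconditions up to $\Lambda$-equivalence, not the number of actions. The standard reduction axiom for $\Box_i$ produces a conjunction over all $\mathsf{R}_i$-successors of the current action, which may be infinite, so ``the recursion terminates'' does not follow from precondition finiteness and the finiteness of $\varphi_i$ alone. The conclusion you need is still true: one can show by induction on $\varphi$ that $\{[\Sigma,\tau]\varphi\colon\tau\in\left\llbracket \Sigma\right\rrbracket\}$ is finite up to $\Lambda$-equivalence (each postcondition affects a fixed atom in one of only three ways, so with finitely many precondition classes each successor set realizes one of finitely many types, and the infinite conjunctions, as well as the disjunction over $\Gamma$, collapse), but this is an argument you must supply rather than a citation to the finite-action-model result. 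Your second worry, the finiteness of the determining sets $F_i$, is well placed but cannot be discharged by compactness, since compactness of $\Lambda$ is not among the hypotheses of Proposition \ref{continuous}; representativeness as defined yields only a possibly infinite determining family. Note, though, that the paper's own Lemma \ref{lem:aux}.\ref{aux2} silently faces the same issue, as it sets $\delta:=\min_{i\in I}w(\boldsymbol{\psi}_i)$ over a family that may be infinite, in which case no positive minimum exists. On this point your attempt is no worse off than the paper's proof, but neither is complete without either finite determining sets or an added compactness assumption.
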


In the proof, we make use of the following lemma:
\begin{lem}
\label{lem:aux} Let $(\ms,d_{w})$ be a metric space, $d_{w}\in\mathcal{D}_{(X,\boldsymbol{\mathcal{L}}_{\Lambda})}$
for $D$ $\Lambda$-representative. Then 
\begin{enumerate}
\item \label{aux1}For every $\epsilon>0$, there are formulas $\chi_{1},\ldots,\chi_{l}\in\mathcal{L}$
such that every $x\in X$ satisfies some $\chi_{i}$, and whenever
$y\models\chi_{i}$ and $z\models\chi_{i}$ for some $i\leq l$, then
$d_{w}(\boldsymbol{y},\boldsymbol{z})<\epsilon$.
\item \label{aux2} For every $\varphi\in\mathcal{L}$, there is a $\delta$
such that for all $x\in X$, if $x\models\varphi$ and $d_{w}(\boldsymbol{x},\boldsymbol{y})<\delta$,
then\textup{ $y\models\varphi$}.
\end{enumerate}
\end{lem}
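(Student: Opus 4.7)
The proof should lean on two ingredients: the convergence of the weight series $\sum_{k=1}^{\infty} w(\boldsymbol{\varphi}_k) < \infty$ for (\ref{aux1}), and on $\Lambda$-representativity together with a compactness argument for (\ref{aux2}). My plan is to handle (\ref{aux1}) as a direct tail estimate and then use it, plus representativity, to get (\ref{aux2}).

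For (\ref{aux1}), fix an enumeration $\boldsymbol{\varphi}_1,\boldsymbol{\varphi}_2,\dots$ of $D$ with representatives $\varphi_k\in\boldsymbol{\varphi}_k$. Given $\epsilon>0$, the convergence of the weight series lets me pick $N$ with $\sum_{k>N} w(\boldsymbol{\varphi}_k)<\epsilon$. For each $J\subseteq\{1,\dots,N\}$ set
\[
  \chi_{J}=\bigwedge_{k\in J}\varphi_{k}\wedge\bigwedge_{k\le N,\ k\notin J}\neg\varphi_{k},
\]
and take the (finitely many, at most $2^N$) satisfiable $\chi_J$'s as the desired $\chi_1,\dots,\chi_l$. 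Every $x\in X$ satisfies exactly the $\chi_J$ with $J=\{k\le N:x\models\varphi_k\}$. If $y,z\models\chi_J$ for the same $J$, then $y$ and $z$ agree on $\boldsymbol{\varphi}_1,\dots,\boldsymbol{\varphi}_N$, so $d_w(\boldsymbol{y},\boldsymbol{z})\le\sum_{k>N}w(\boldsymbol{\varphi}_k)<\epsilon$.

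For (\ref{aux2}), fix $\varphi\in\mathcal{L}$. By $\Lambda$-representativity of $D$, choose $\{\boldsymbol{\psi}_i\}_{i\in I}\subseteq D$ such that every sign choice $S_J=\{\psi_i\}_{i\in J}\cup\{\neg\psi_i\}_{i\in I\setminus J}$ $\Lambda$-entails $\varphi$ or $\neg\varphi$. The key reduction is to a \emph{finite} such $I'$: using compactness of $\Lambda$, each of the (at most two) cases per sign choice is witnessed by a finite subset, and these finitely many witnesses can be amalgamated into a single finite $I'\subseteq I$ that still has the determining property (alternatively one can bootstrap from (\ref{aux1}), applied with a sufficiently small $\epsilon$, and argue that each $\chi_i$ must eventually be contained in either $[\varphi]$ or $[\neg\varphi]$). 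Once $I'$ is finite, set $\delta=\min_{i\in I'}w(\boldsymbol{\psi}_i)>0$. If $d_w(\boldsymbol{x},\boldsymbol{y})<\delta$, then $x$ and $y$ must agree on every $\boldsymbol{\psi}_i$ with $i\in I'$, for disagreement on a single $\boldsymbol{\psi}_i$ contributes at least $w(\boldsymbol{\psi}_i)\ge\delta$ to their distance. Hence $y\models S_J$ for the same $J$ as $x$; since $x\models\varphi$ forces $S_J\vDash\varphi$ (and not $\neg\varphi$), we conclude $y\models\varphi$.

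The main obstacle is the reduction to a finite representing set in (\ref{aux2}): without extracting finiteness, the infimum of weights $w(\boldsymbol{\psi}_i)$ for $i\in I$ could be zero and the argument breaks down. Compactness of $\Lambda$ is the natural tool, and it matches the standing assumptions in the rest of the paper; I would structure this step as the technical core of the lemma.
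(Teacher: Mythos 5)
Your part (\ref{aux1}) is the paper's own argument almost verbatim: truncate the weight series at $N$, form the $2^{N}$ signed conjunctions of representatives of $\boldsymbol{\varphi}_{1},\ldots,\boldsymbol{\varphi}_{N}$, and bound the remaining disagreement by the tail. For part (\ref{aux2}) the paper does exactly what you do \emph{except} for your reduction step: it takes the representing family $\{\boldsymbol{\psi}_{i}\}_{i\in I}$ supplied by $\Lambda$-representativity and sets $\delta=\min_{i\in I}w(\boldsymbol{\psi}_{i})$ with no comment on whether $I$ is finite. You have correctly identified that this is a genuine soft spot: the definition of $\Lambda$-representative places no bound on $|I|$, and for infinite $I$ the infimum of the weights is $0$, so the paper's $\delta$ is not obviously positive. (Note, though, that under the literal reading of the lemma's hypothesis, $d_{w}\in\mathcal{D}_{(X,\boldsymbol{\mathcal{L}}_{\Lambda})}$, the descriptor contains $\boldsymbol{\varphi}$ itself, so one may take $I=\{\boldsymbol{\varphi}\}$ and $\delta=w(\boldsymbol{\varphi})$ and (\ref{aux2}) is immediate; the issue you are repairing only bites for proper representative descriptors $D\subsetneq\boldsymbol{\mathcal{L}}_{\Lambda}$, which is how the lemma is actually invoked in Proposition \ref{continuous}.)

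Two caveats on your repair. First, compactness of $\Lambda$ is not a hypothesis of this lemma, nor of Proposition \ref{continuous} which uses it, so you are proving a strengthened statement rather than the one given; the honest moral is that the lemma needs either compactness of $\Lambda$ or a finiteness clause built into $\Lambda$-representativity (or $\boldsymbol{\varphi}\in D$). Second, the amalgamation is not a matter of combining ``finitely many witnesses'': there are $2^{|I|}$ sign choices $J$, each with its own finite witness $F_{J}$, so finiteness of the union is not automatic. The correct argument is a covering one: each $F_{J}$ determines a basic clopen subset of the compact space $2^{I}$ on which that witness decides $\varphi$ versus $\neg\varphi$; extract a finite subcover and let $I'$ be the union of the corresponding $F_{J_{k}}$. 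Your parenthetical alternative (bootstrapping from (\ref{aux1}) ``with a sufficiently small $\epsilon$'') is circular, since identifying an $\epsilon$ that works for $\varphi$ is precisely what (\ref{aux2}) asserts. The final step of your argument, $\delta=\min_{i\in I'}w(\boldsymbol{\psi}_{i})$ and the soundness-based case analysis, is fine.
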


\begin{proof}
[Proof of Lemma \ref{lem:aux}]For 1., note that there is some $n>0$ for which $\sum_{k=n}^{\infty}w(\boldsymbol{\varphi}_{k})<\epsilon$.
For $j\in\{1,...,n-1\}$ pick some $\varphi_{j}\in\mathbf{\boldsymbol{\varphi}}_{j}$.
Let $J_{1},...,J_{2^{n-1}}$ be an enumeration of the subsets of $\{1,...,n-1\}$,
and let the formula $\chi_{i}$ be $\bigwedge_{j\in J_{i}}\varphi_{j}\wedge\bigwedge_{j\not\in J_{i}}\neg\varphi_{j}$
for each $i\in\{1,...,2^{n-1}\}$. Then each $x\in X$ must satisfy
$\chi_{i}$ for some $i$. Moreover, whenever $y\models\chi_{i}$
and $z\models\chi_{i}$, $d_{w}(\boldsymbol{y},\boldsymbol{z})=\sum_{k=1}^{\infty}w(\boldsymbol{\varphi}_{k})d_{k}(\boldsymbol{y},\boldsymbol{z})=\sum_{k=n}^{\infty}w(\boldsymbol{\varphi}_{k})d_{k}(\boldsymbol{y},\boldsymbol{z})<\epsilon$.
For 2., let $\varphi\in\mathcal{L}$ be given. Since D is representative,
there are $\{\boldsymbol{\psi}_{i}\}_{i\in I}\subseteq D$ such that
for all sets $S=\{\psi_{i}\}_{i\in J}\cup\{\neg\psi_{i}\}_{i\in I\setminus J}$
with $J\subseteq I$ either $\varphi$ or $\neg\varphi$ is $\Lambda$-entailed
by $S$. Then $\delta:=min_{i\in I}w(\boldsymbol{\psi_{i}})$ yields
the desired. 
\end{proof}
\begin{proof}
[Proof of Proposition \ref{continuous}]We show that $\boldsymbol{f}$ is uniformly continuous, using the
$\varepsilon$-$\delta$ formulation of continuity. 

Assume that $\epsilon>0$ is given. We have to find some $\delta>0$
such that for all $\boldsymbol{x},\boldsymbol{y}\in\ms$ $d_{w}(\boldsymbol{x},\boldsymbol{y})<\delta$
implies $d_{w}(\boldsymbol{f(x)},\boldsymbol{f(y)})<\epsilon$. By
Lemma \ref{lem:aux}.\ref{aux1}, there exist $\chi_{1},\ldots,\chi_{l}$
such that $f(x)\models\chi_{i}$ and $f(y)\models\chi_{i}$ implies
$d_{w}(\boldsymbol{f(x)},\boldsymbol{f(y)})<\epsilon$ and for every
$\boldsymbol{x}\in\ms$ there is some $i\leq l$ with $\boldsymbol{f(x)}\models\chi_{i}$.
We use $\chi_{1},\ldots,\chi_{l}$ to find a suitable $\delta$:\medskip{}

\noindent \textbf{Claim:} There is a function $\boldsymbol{\delta}:\mathcal{L}\rightarrow(0,\infty)$
such that for any $\varphi\in\mathcal{L}$, if $f(x)\models\varphi$
and $d_{w}(\boldsymbol{x},\boldsymbol{y})<\boldsymbol{\delta}(\varphi)$,
then $f(y)\models\varphi$.\medskip{}

\noindent Clearly, setting $\delta=\min\{\boldsymbol{\delta}(\chi_{i})\colon i\leq l\}$
yields a $\delta$ with the desired property. Hence the proof is completed
by a proof of the claim. The claim is shown by induction over the
complexity of $\varphi$. To be explicit, the function $\boldsymbol{\delta}:\mathcal{L}\rightarrow(0,\infty)$
will depend on the clean map $\boldsymbol{f}$ and the action model
$\Sigma{\scriptstyle \Gamma}$ it is based on. More precisely, $\boldsymbol{\delta}$
depends on the set $\{pre(\sigma)\colon\sigma\in\left\llbracket \Sigma\right\rrbracket \}$.
The below construction of $\boldsymbol{\delta}$ is a simultaneous
induction over all action models with the set of preconditions $\{pre(\sigma)\colon\sigma\in\left\llbracket \Sigma\right\rrbracket \}$.
By Lemma \ref{lem:disjoint model representer}, we can assume that
for all $\varphi\neq\psi\in\{pre(\sigma)\colon\sigma\in\left\llbracket \Sigma\right\rrbracket \}$,
it holds that $\vDash pre(\sigma)\wedge pre(\sigma')\rightarrow\bot$.
Wlog, assume all negations in $\varphi$ immediately precede atoms.\medskip{}

If $\varphi$ is an atom or negated atom: By Lemma \ref{lem:aux}.\ref{aux2},
there exists for any $\sigma\in\left\llbracket \Sigma\right\rrbracket $
some $\delta_{\sigma}$ such that whenever $x\models pre(\sigma)$
and $d_{w}(\boldsymbol{x},\boldsymbol{y})<\delta$$_{\sigma}$ we
also have that $y\models pre(\sigma)$. Likewise, there is some $\delta_{0}$
such that whenever $x\vDash\varphi$ and $d_{w}(\boldsymbol{x},\boldsymbol{y})<\delta_{0}$
we also have that $y\models\varphi$. By assumption, the set $\{pre(\sigma)\colon\sigma\in\left\llbracket \Sigma\right\rrbracket \}$
is finite. Let $S=\{\delta_{0}\}\cup\{\delta_{\sigma}\colon\sigma\in\left\llbracket \Sigma\right\rrbracket \}$.
We can thus set $\boldsymbol{\delta}(\varphi)=\min(S)$. To see that
this $\boldsymbol{\delta}$ is as desired, assume $f(x)\models\varphi$.
With $x=Ms$, there is a unique $\sigma\in\Gamma$ in the deterministic,
multi-pointed action model $(\Sigma,\Gamma)$ such that $(s,\sigma)$
is the designated state of $f(x)$. In particular, we have that $x\models pre(\sigma)$.
By our choice of $\boldsymbol{\delta}(\varphi)$, we get that $d_{w}(\boldsymbol{x},\boldsymbol{y})<\boldsymbol{\delta}(\varphi)$
implies $y\models pre(\sigma)$. For $y=Nt$, we thus have that $(t,\sigma)$
is the designated state of $f(Nt)$. Moreover, we have $x\vDash\varphi\Leftrightarrow y\vDash\varphi$.
Together, these imply that $f(Nt)\models\varphi$.\smallskip{}

If $\varphi$ is $\varphi_{1}\wedge\varphi_{2}$, set $\boldsymbol{\delta}(\varphi)=\min(\boldsymbol{\delta}(\varphi_{1}),\boldsymbol{\delta}(\varphi_{2})\,)$
To show that this is as desired, assume $f(x)\models\varphi_{1}\wedge\varphi_{2}$.
We thus have $f(x)\models\varphi_{1}$ and $f(x)\models\varphi_{2}$.
By induction, this implies that whenever $d_{w}(\boldsymbol{x},\boldsymbol{y})<\boldsymbol{\delta}(\varphi)$,
we have $f(y)\models\varphi_{1}$ and $f(y)\models\varphi_{2}$ and
hence $f(y)\models\varphi_{1}\wedge\varphi_{2}$.\smallskip{}

If $\varphi$ is $\varphi_{1}\vee\varphi_{2}$, set $\delta(\varphi)=\min(\boldsymbol{\delta}(\varphi_{1}),\boldsymbol{\delta}(\varphi_{2})\,)$
To show that this is as desired, assume $f(x)\models\varphi_{1}\vee\varphi_{2}$.
We thus have $f(x)\models\varphi_{1}$ or $f(x)\models\varphi_{2}$.
By induction, this implies that whenever $d_{w}(\boldsymbol{x},\boldsymbol{y})<\boldsymbol{\delta}(\varphi)$
we have $f(y)\models\varphi_{1}$ or $f(y)\models\varphi_{2}$ and
hence $f(y)\models\varphi_{1}\vee\varphi_{2}$.\smallskip{}

If $\varphi$ is $\Diamond\varphi_{1}$: By Lemma \ref{lem:aux}.\ref{aux1},
there are $\chi_{1},\ldots,\chi_{l}$ such that every $\boldsymbol{x}\in\ms$
satisfies some $\chi_{i}$ and whenever $z\models\chi_{i}$ and $z'\models\chi_{i}$
for some $i\leq l$ we have $d_{w}(z,z')<\boldsymbol{\delta}(\varphi_{1})$.

Now, let $F=\{\Diamond(pre(\sigma)\wedge\chi_{i})\colon\sigma\in\left\llbracket \Sigma\right\rrbracket ,i\leq l\}\cup\{pre(\sigma)\colon\sigma\in\left\llbracket \Sigma\right\rrbracket \}$.
By assumption, $F$ is finite. By Lemma \ref{lem:aux}.\ref{aux2},
for each $\psi\in F$ there is some $\delta_{\psi}$ such that $x\models\psi$
and $d_{w}(\boldsymbol{x},\boldsymbol{y})<\delta_{\psi}$ implies
$y\models\psi$. Set $\boldsymbol{\delta}(\varphi)=\min\{\delta_{\psi}\colon\psi\in F\}.$

To show that this is as desired, assume $f(x)\models\Diamond\varphi_{1}$
and let $y$ be such that $d_{w}(\boldsymbol{x},\boldsymbol{y})<\boldsymbol{\delta}(\varphi)$.
We have to show that $f(y)\models\Diamond\varphi_{1}.$ Let $x=Ms$
and let the designated state of $f(x)$ be $(s,\sigma)$. Since $f(x)\models\Diamond\varphi_{1}$,
there is some $(s',\sigma')$ in $\left\llbracket f(x)\right\rrbracket $
with $(s,\sigma)R(s',\sigma')$. In particular $x\models\Diamond(pre(\sigma')\wedge\chi_{i})$
for some $\sigma'\in\left\llbracket \Sigma\right\rrbracket $ and
$i\leq l$. Thus also $y\models\Diamond(pre(\sigma')\wedge\chi_{i})$.
Hence, with $y=Nt$, there is some $t'\text{\ensuremath{\in}}\left\llbracket y\right\rrbracket $
accessible from $y$'s designated state $t$ that satisfies $pre(\sigma')\wedge\chi_{i}.$
By determinacy and the fact that $\vDash pre(\sigma)\wedge pre(\sigma')\rightarrow\bot$
whenever $\varphi\neq\psi\in\{pre(\sigma)\colon\sigma\in\left\llbracket \Sigma\right\rrbracket \}$,
there is a unique$\tilde{\sigma}\in\Gamma$ with $pre(\tilde{\sigma})=pre({\sigma'})$.
Let $\Gamma'=\Gamma-\{\tilde{\sigma}\}\cup\{\sigma\}$ and let $f'$
be the model transformer induced by $\Sigma{\scriptstyle \Gamma'}$.
As $f'$ has the same set $\{pre(\sigma)\colon\sigma\in\left\llbracket \Sigma\right\rrbracket \}$
as $f$, our induction hypothesis applies to $f'$ . Consider the
models $Ms'$ and $Nt'$. We have that $Ms'\models\chi_{i}$ and $Nt'\vDash\chi_{i}$
jointly imply $d_{w}(Ms',Nt')<\boldsymbol{\delta}(\varphi_{1})$ which,
in turn, implies that $f'(Ms')\models\varphi_{1}$ iff $f'(Nt')\models\varphi_{1}$.
In particular, we obtain that $\left\llbracket f(y)\right\rrbracket ,(t',\sigma')\models\varphi_{1}$.
Since $(t,\sigma)R(t',\sigma')$ this implies that $f(y)\models\Diamond\varphi_{1}.$\smallskip{}

If $\varphi$ is $\Box\varphi_{1}$: The construction is similar to
the previous case. We only give the relevant differences. Again, there
are some$\chi_{1},\ldots,\chi_{l}$ such that every $\boldsymbol{x}\in\ms$
satisfies some $\chi_{i}$ and whenever $z\vDash\chi_{i}$ and $z'\vDash\chi_{i}$
for some $i\leq l$ we have $d_{w}(\boldsymbol{z},\boldsymbol{z}')<\boldsymbol{\delta}(\varphi_{1})$.

Now, let $R=\{pre(\sigma)\wedge\chi_{i}\colon\sigma\in\left\llbracket \Sigma\right\rrbracket ,i\leq l\}$
and let $F=\{\Box(\bigvee_{k\in J}k)\colon J\subseteq R\}\cup\{pre(\sigma)\colon\sigma\in\left\llbracket \Sigma\right\rrbracket \}$.
Again, $F$ is finite and for each $\psi\in F$ there is some $\delta_{\psi}$
such that $x\models\psi$ and $d_{w}(\boldsymbol{x},\boldsymbol{y})<\delta_{\psi}$
implies $y\models\psi$. Set $\delta(\varphi)=\min\{\delta_{\psi}\colon\psi\in F\}.$

To show that this is as desired, assume $f(x)\models\Box\varphi_{1}$
and let $y$ be such that $d_{w}(\boldsymbol{x},\boldsymbol{y})<\boldsymbol{\delta}(\varphi)$.
We have to show that $f(y)\models\Box\varphi_{1}.$ Let $y=Nt$ ,
let $(t,\sigma)$ be the designated state of $f(y)$ and assume there
is some $(t',\sigma')$ in $\left\llbracket f(y)\right\rrbracket $
with $(t,\sigma)R(t',\sigma')$. We have to show that $\varphi_{1}$
holds at $(t',\sigma')$. To this end, note that by construction,
$t'$ satisfies $pre(\sigma')\wedge\chi_{i}$, for some $i\leq l$.
By the choice of $\boldsymbol{\delta}(\varphi)$, there is some $s'\in\left\llbracket x\right\rrbracket $
with $sRs'$ (for $x=Ms$) that also satisfies $pre(\sigma')\wedge\chi_{i}$.
Hence $(s',\sigma')$ is in $\left\llbracket f(x)\right\rrbracket $
and $(s,\sigma)R(s',\sigma')$. By assumption we have $(s',\sigma')\models\varphi_{1}$
and by an argument similar to the last case we get $(t',\sigma')\models\varphi_{1}$.
Hence $f(y)\models\square\varphi_{1}$.
\end{proof}
\begin{cor}
Any clean map $\boldsymbol{f}:\ms\rightarrow\ms$ is continuous with
respect to the Stone(-like) topology \textbf{\emph{$\mathcal{T}_{\boldsymbol{\mathcal{L}}_{\Lambda}}$}}.
\end{cor}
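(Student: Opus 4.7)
The plan is to obtain this corollary as an essentially immediate consequence of Proposition \ref{continuous} combined with Proposition \ref{prop:same-topology}, since the heavy lifting (the $\varepsilon$-$\delta$ argument by induction on formula complexity) has already been done for the metric formulation.

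First I would observe that $\boldsymbol{\mathcal{L}}_{\Lambda}$ is trivially $\Lambda$-representative as a descriptor for itself: for any $\boldsymbol{\varphi} \in \boldsymbol{\mathcal{L}}_{\Lambda}$, take the singleton $\{\boldsymbol{\varphi}\} \subseteq \boldsymbol{\mathcal{L}}_{\Lambda}$ and note that $\varphi$ is $\Lambda$-entailed by $\{\varphi\}$ and $\neg\varphi$ by $\{\neg\varphi\}$. Thus Proposition \ref{continuous} applies: picking any weight function $w$ on $\boldsymbol{\mathcal{L}}_{\Lambda}$ yields a metric $d_w \in \mathcal{D}_{(X, \boldsymbol{\mathcal{L}}_{\Lambda})}$ with respect to which $\boldsymbol{f}$ is uniformly continuous, hence continuous.

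Next, I would invoke Proposition \ref{prop:same-topology}, which identifies the metric topology $\mathcal{T}_w$ induced by $d_w$ with the Stone-like topology $\mathcal{T}_{\boldsymbol{\mathcal{L}}_{\Lambda}}$ on $\ms$. Since continuity of a map between topological spaces depends only on the topologies involved and not on the particular metric generating them, continuity of $\boldsymbol{f}: (\ms, d_w) \to (\ms, d_w)$ translates directly into continuity of $\boldsymbol{f}: (\ms, \mathcal{T}_{\boldsymbol{\mathcal{L}}_{\Lambda}}) \to (\ms, \mathcal{T}_{\boldsymbol{\mathcal{L}}_{\Lambda}})$.

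There is no real obstacle here: the corollary is purely a translation from the metric setting to the topological setting, mediated by the two earlier results. The only point worth stating carefully in the write-up is why $\boldsymbol{\mathcal{L}}_{\Lambda}$ qualifies as a $\Lambda$-representative descriptor of itself, since Proposition \ref{continuous} is formulated with that hypothesis; otherwise the proof reduces to a one-line chain of citations.
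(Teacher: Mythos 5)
Your proposal is correct and matches the paper's intended argument: the corollary is stated without proof precisely because it follows immediately from Proposition \ref{continuous} together with Proposition \ref{prop:same-topology}, exactly as you describe. Your care in checking that $\boldsymbol{\mathcal{L}}_{\Lambda}$ is itself $\Lambda$-representative (so that Proposition \ref{continuous} applies) is a worthwhile detail the paper leaves implicit.
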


\subsubsection*{{\small{}Acknowledgments.}}

{\small{}}%
\begin{comment}
{\small{}Removed for anonymity, but the gratitude remains.}
\end{comment}
{\small \par}

{\small{}}%

{\small{}The contribution of R.K. Rendsvig was funded by the Swedish
Research Council through the framework project `Knowledge in a Digital
World' (Erik J. Olsson, PI) and The Center for Information and Bubble
Studies, sponsored by The Carlsberg Foundation. We thank Kristian
Knudsen Olesen for his thorough reading and invaluable comments, Alexandru
Baltag, Johan van Benthem, Nick Bezhanishvili, Paolo Galeazzi, Hannes
Leitgeb, Olivier Roy and the participants of LogiCIC 2015 and 2016
(Amsterdam), CADILLAC 2016 (Copenhagen), The von Wright Symposium
(2016, Helsinki), Higher Seminar in Theoretical Philosophy (2016 and
2017, Lund), Tsinghua-Bayreuth Logic Workshop 2016 (Beijing), and
a session of the MCMP Logic Seminar 2017 (Munich) for valuable comments
and discussion.}

{\small \par}

%To make Dutch authors bibtex correctly:
\DeclareRobustCommand{\VAN}[2]{#2}%
\begin{comment}
\begin{lyxgreyedout}
This note needs to be here to compile (for now).
\end{lyxgreyedout}
\end{comment}

\bibliographystyle{abbrv}

\end{document}